\newtheorem{thm}{Theorem}[section]
\newtheorem{lemma}[thm]{Lemma}
\newtheorem{prop}[thm]{Proposition}
\newtheorem{cor}[thm]{Corollary}
\newtheorem*{thm:thm1}{Theorem \ref{thm1}}
\newtheorem*{thm:thm2}{Proposition \ref{thm2}}
\newtheorem*{thm:thm3}{Theorem \ref{thm3}}
\newtheorem*{cor:cor1}{Corollary \ref{cor1}}
\theoremstyle{definition}
\newtheorem{rem}[thm]{Remark}
\newtheorem{question}[thm]{Question}
\theoremstyle{remark}
\newcommand{\Homeo}{\mathop{\rm Homeo}}
\newcommand{\Diff}{\mathop{\rm Diff}}
\newcommand{\id}{\mathop{\rm id}}
\newcommand{\rot}{\mathop{\rm rot}}
\begin{document}

\author{Michael P. Cohen}
\address{Michael P. Cohen,
Department of Mathematics and Statistics,
Carleton College,
One North College Street,
Northfield, MN 55057}
\email{mcohen@carleton.edu}

\thanks{\textit{Acknowledgements.}  The author thanks E. Militon for suggesting a result which eventually became Theorem \ref{thm1}, and for improving the example in Proposition \ref{thm2}.  The author also thanks Christian Rosendal for helpful suggestions.}

\subjclass[2010]{20F65, 22A05, 37E05, 37E10}

%\keywords{Polish group topologies, circle diffeomorphisms, bounded variation, absolute continuity}

\title{Maximal pseudometrics and distortion of circle diffeomorphisms}

\begin{abstract}  We initiate a study of distortion elements in the Polish groups $\Diff_+^k(\mathbb{S}^1)$ ($1\leq k<\infty$), as well as $\Diff_+^{1+AC}(\mathbb{S}^1)$, in terms of maximal metrics on these groups.  We classify distortion in the $k=1$ case: a $C^1$ circle diffeomorphism is $C^1$-undistorted if and only if it has a hyperbolic periodic point.  On the other hand, answering a question of Navas, we exhibit analytic circle diffeomorphisms with only non-hyperbolic fixed points which are $C^{1+AC}$-undistorted, and hence $C^k$-undistorted for all $k\geq 2$.  In the appendix, we exhibit a maximal metric on $\Diff_+^{1+AC}(\mathbb{S}^1)$, and observe that this group is quasi-isometric to a hyperplane of $L^1(I)$.
\end{abstract}

\maketitle

\section{Introduction}

A pseudometric $d$ on a topological group $G$ is called \textit{maximal} if it is continuous, right-invariant, and for every continuous right-invariant pseudometric $\rho$ on $G$, there exist constants $K$ and $C$ such that\\

\begin{center} $\rho \leq K\cdot d+C$.
\end{center}
\vspace{.3cm}

By definition any two maximal pseudometrics on $G$ are quasi-isometrically equivalent, and thus if a maximal pseudometric $d$ exists, then it is a representative of a canonical \textit{quasi-isometry type} for $G$: the quasi-isometric equivalence class of all maximal pseudometrics on $G$.  Moreover, it turns out that this equivalence class contains all right-invariant Cayley metrics on $G$ which are induced by an open symmetric coarsely bounded generating set $\Sigma\subseteq G$.  (A set in $G$ is called coarsely bounded if it is bounded with respect to every continuous right-invariant pseudometric on $G$.)  This very general perspective for the study of the large scale geometry of Polish groups has been advanced and thoroughly developed by Rosendal  \cite{rosendal_2018a}.

Let $f\in G$ and let $d$ be a maximal pseudometric on $G$.  Adopting the definition proposed by Mann and Rosendal in \cite{mann_rosendal_2018a}, we will say that $f$ is \textit{distorted} if the inclusion map $\langle f\rangle\hookrightarrow G$ is not a quasi-isometric embedding, with respect to any right-invariant Cayley metric on $\langle f\rangle$ and with respect to $d$ on $G$; otherwise $f$ is \textit{undistorted}.  This definition does not depend on the choice of maximal pseudometric $d$.

By definition every torsion element of $G$ is undistorted.  To understand distortion of non-torsion elements $f\in G$, we study the limit $L=\displaystyle\lim_{n\rightarrow\infty}\dfrac{d(f^n,e)}{n}$.  By the right-invariance of $d$ the sequence $(d(f^n,e))_{n=0}^\infty$ is subadditive, and hence the limit exists.  A non-torsion element $f\in G$ is distorted if and only if $L=0$, and undistorted if and only if $L>0$.

The study of \textit{distortion}, as an informal notion, has been essential in the study of the dynamics of homeomorphisms and diffeomorphisms of compact manifolds (see for example \cite{demelo_vanstrien_1993a}, \cite{franks_handel_2006a}, \cite{calegari_freedman_2006a}, \cite{militon_2014a}, \cite{navas_2018a})\textemdash but the specific definitions that authors have employed have depended on context and purpose, and there does not appear to be a universally accepted definition of the term.  Below, we emphasize some of the key features of the definition we adopt in this paper:

\begin{enumerate}
		\item Distortion \textit{a priori} is a topological group property, i.e. the quasi-isometry type of the group, and its set of distortion elements, are determined by both the group structure and the underlying topology on the group.
		\item Conjugation induces a topological group automorphism.  Therefore the property of being distorted (or undistorted) is a natural conjugacy invariant of an element $f\in G$.
		\item If $\ell$ is any continuous length function on $G$, then $\ell$ induces a continuous right-invariant metric defined by $d_\ell(x,y)=\ell(xy^{-1})$, and we have $d_\ell\leq K\cdot d+C$ for some constants $K$ and $C$, for any maximal pseudometric $d$.  Therefore if $f\in G$ is distorted, then $\ell(f^n)/n\rightarrow0$.  Contrapositively, if there exists any continuous length function for which $\ell(f^n)/n\not\rightarrow0$, then $f$ is undistorted.
		\item Suppose $H$ is a subgroup of $G$ equipped with a topology at least as fine as the inherited subspace topology.  If $d_G$ and $d_H$ are maximal pseudometrics on $G$ and $H$ respectively, then $d=d_H+d_G$ is a continuous right-invariant pseudometric on $H$.  We have $d\leq d_H$, and $d\leq K\cdot d_H+C$ for some constants $K$ and $C$ by the maximality of $d_H$, and hence $d$ and $d_H$ are quasi-isometric.  It follows that if $f\in H$ and $f$ is undistorted in $G$, then $f$ is also undistorted in $H$.
\end{enumerate}

In \cite{mann_rosendal_2018a}, Mann and Rosendal characterized the maximal metrics on groups of the form $\Homeo_0(M)$, which denotes the connected component of identity in the group of all homeomorphisms of a compact connected manifold $M$\textemdash the maximal metrics are quasi-isometric to the fragmentation metrics given by finite open coverings of $M$.  Earlier in \cite{militon_2014a}, Militon studied distortion elements of $\Homeo_0(M)$, where $M$ is a compact surface, in terms of fragmentation length.  By the results of \cite{mann_rosendal_2018a}, this approach is equivalent to studying distortion in the sense of maximal pseudometrics.

For $k\geq 1$ an integer, let $\Diff_+^k(\mathbb{S}^1)$ (resp. $\Diff_+^k(I)$) denote the Polish group of orientation-preserving $C^k$ diffeomorphisms of the circle $\mathbb{S}^1$ (resp. the interval $I=[0,1]$), equipped with the uniform $C^k$ topology.  The following subgroup inclusions hold:\\

\begin{center} $\Diff_+^1(\mathbb{S}^1)\geq \Diff_+^2(\mathbb{S}^1)\geq \Diff_+^3(\mathbb{S}^1) \geq ...$
\end{center}
\vspace{.3cm}

The topology on each successive group is strictly finer than the subspace topology inherited from its predecessor in the list, and therefore the notions of distortion associated to each group are \textit{a priori} distinct.  Let us say that a diffeomorphism $f$ of class $C^k$ is \textit{$C^k$-distorted} if $f$ is either distorted in the Polish group $\Diff_+^k(\mathbb{S}^1)$ (or $\Diff_+^k(I)$), or if $f$ is torsion.  (We allow torsion elements to be regarded as $C^k$-distorted to satisfy our intuition.)  By remark (4) above, if $f$ is $C^j$-undistorted then $f$ is $C^k$-undistorted for all $k\geq j$; and contrapositively, if $f$ is $C^k$-distorted, then $f$ is $C^j$-distorted for all $j\leq k$.  Put in other words, if $f$ is a circle diffeomorphism with a high degree of smoothness (for instance $C^\infty$), then $f$ has a greater chance to be $C^k$-undistorted for higher values of $k$.

The purpose of this paper is to initiate a study of distortion of circle diffeomorphisms from the perspective of maximal pseudometrics.  In \cite{cohen_2016a}, the author showed that each of the diffeomorphism groups $\Diff_+^k(\mathbb{S}^1)$ (resp. $\Diff_+^k(I)$) admits a maximal metric; and in case $k=1$, a (pseudo-) metric is given explicitly.  The maximal metric provided in \cite{cohen_2016a} for $\Diff_+^1(\mathbb{S}^1)$ is actually erroneous, because it is not right-invariant.  In an appendix to the present paper, we correct this error.  Using this explicit pseudometric (see Figure \ref{fig1}), our first theorem characterizes $C^1$-distortion of circle diffeomorphisms.  The proof is in Section 2.

\begin{thm} \label{thm1}  An element $f$ in $\Diff_+^1(\mathbb{S}^1)$ or $\Diff_+^1(I)$ is $C^1$-undistorted if and only if $f$ has a hyperbolic periodic point; i.e., a point $x\in\mathbb{S}^1$ with $f^q(x)=x$ and $(f^q)'(x)\neq 1$, for $q\in\mathbb{Z}^+$.
\end{thm}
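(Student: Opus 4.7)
My plan is to use remark (3) with the continuous length function
$\ell(g) := \sup_{x} |\log g'(x)|$
on $\Diff_+^1(\mathbb{S}^1)$. One checks that $\ell$ is symmetric (by the change of variable $y = g(x)$), subadditive (by the chain rule $\log(gh)'(x) = \log g'(h(x)) + \log h'(x)$), and continuous in the $C^1$ topology, so $\ell$ is a continuous length function. I expect the explicit maximal pseudometric of the appendix (Figure \ref{fig1}) to be quasi-isometric to the associated right-invariant pseudometric $(f,g) \mapsto \sup_x |\log f'(x) - \log g'(x)|$, so that $\ell(f^n)$ and $d(f^n, e)$ grow at comparable rates. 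For the easy direction, suppose $f$ has a hyperbolic periodic point $x_0$ of period $q$ with $\lambda := (f^q)'(x_0) \neq 1$. The chain rule along the periodic orbit gives $(f^{nq})'(x_0) = \lambda^n$, so $\ell(f^{nq}) \geq n |\log \lambda|$, and remark (3) yields undistortion.

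For the converse, I assume $f$ has no hyperbolic periodic point and aim to show $\ell(f^n)/n \to 0$, which I split into two cases. In Case A, $f$ has periodic points and all are parabolic. Replacing $f$ by $f^q$ where $q$ is the common period (the denominator of the rotation number on $\mathbb{S}^1$; simply $q = 1$ on $I$), I may assume $\mathrm{Fix}(f) \neq \emptyset$ and $f'(p) = 1$ for every $p \in \mathrm{Fix}(f)$. Fix $\epsilon > 0$ and, by continuity of $f'$, choose a neighborhood $U$ of $\mathrm{Fix}(f)$ with $|\log f'| < \epsilon$ on $U$. On the compact complement $K := \mathbb{S}^1 \setminus U$, $|f(x) - x| \geq \delta > 0$. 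Because $f$ is monotone with no fixed points on each component of $\mathbb{S}^1 \setminus \mathrm{Fix}(f)$, orbits move through each component unidirectionally, and every orbit spends total time at most $N_\epsilon := \mathrm{diam}(\mathbb{S}^1)/\delta$ in $K$, independently of $n$ and $x$. Hence $|\log(f^n)'(x)| \leq n\epsilon + N_\epsilon \cdot \|\log f'\|_\infty$, so $\limsup_n \ell(f^n)/n \leq \epsilon$; letting $\epsilon \to 0$ finishes Case A.

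In Case B, $f$ has no periodic points (only possible on $\mathbb{S}^1$), so the rotation number $\rho(f)$ is irrational. Krylov--Bogolyubov and classical arguments yield a unique $f$-invariant Borel probability measure $\mu$. Using the semi-conjugacy $F(x) := \mu([0,x])$ of $f$ with the rotation $R_{\rho(f)}$, I would show $\int \log f'\, d\mu = 0$. Unique ergodicity then implies, via a uniform Birkhoff theorem applied to the continuous observable $\log f'$, that $\frac{1}{n}\log(f^n)'(x) \to 0$ uniformly in $x$, so $\ell(f^n)/n \to 0$ as required.

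The main obstacle I anticipate is verifying $\int \log f'\, d\mu = 0$ in Case B when the semi-conjugacy $F$ is singular, which can occur in $C^1$ regularity by Denjoy-type counterexamples. The naive computation from differentiating $F \circ f \equiv F + \rho(f) \pmod 1$ breaks down, so instead one must exploit $f$-invariance of $\mu$ directly, perhaps by approximating $F$ by smooth functions or by a direct Lyapunov-exponent argument bounding $\int \log f'\, d\mu$ from both sides using the cocycle relation. A secondary concern is confirming that the explicit maximal pseudometric of the appendix really does grow at the same rate as $\ell$ along iterates of $f$; this should follow from the specific formula used in Figure \ref{fig1}, but must be checked once that formula is at hand.
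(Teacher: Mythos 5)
Your structure mirrors the paper's (easy direction via the orbit of the periodic point; rational rotation number reduced to rotation number $0$; irrational rotation number via unique ergodicity), and the pseudometric you guess is exactly the one proved maximal in the appendix. Two points are worth flagging.

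\emph{Case A.} Your argument here is actually cleaner and more uniform than the paper's. The paper first proves the sublinear estimate one component $[a,b]$ of $\mathbb{S}^1\setminus\mathrm{Fix}(f)$ at a time, with a bound $N$ depending on the component, and is then forced into a compactness/contradiction argument (extracting a sequence of points $x_{n_i}$ in distinct components, showing they accumulate at a hyperbolic fixed point) to make the estimate global. Your observation that there is a \emph{single} $\delta>0$ with $|f(x)-x|\geq\delta$ on the compact set $K=\mathbb{S}^1\setminus U$, together with the monotonicity of each orbit inside its component, gives a bound $N_\epsilon\leq 1/\delta$ on the number of visits to $K$ that is uniform in $x$ at once, avoiding the auxiliary contradiction argument entirely. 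This is a genuine simplification; just note that each orbit stays in the closure of a single component, and that the intervals $(f^k(x),f^{k+1}(x))$ are pairwise disjoint, so the number of steps with $|f^{k+1}(x)-f^k(x)|\geq\delta$ is at most the length of the component divided by $\delta$.

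\emph{Case B.} Here there is a real (though small) gap as written. You correctly identify that the semi-conjugacy argument for $\int\log f'\,d\mu=0$ is delicate when $F$ is singular, and you defer to a ``direct Lyapunov-exponent argument'' without spelling it out. But the fix is much shorter than you anticipate, and does not need the semi-conjugacy at all: once unique ergodicity gives the \emph{uniform} convergence $\frac{1}{n}\log(f^n)'(x)\to L$, if $L\neq0$ then for $n$ large either $(f^n)'>1$ everywhere on $\mathbb{S}^1$ or $(f^n)'<1$ everywhere, and either is impossible since $\int_{\mathbb{S}^1}(f^n)'=1$ for a circle diffeomorphism. (Equivalently, Jensen gives $\int\log(f^n)'\leq 0$, so $L\leq 0$, and applying the same to $f^{-1}$ and using invariance of $\mu$ gives $L\geq 0$.) This is exactly the paper's observation. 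Your proposed route to $\int\log f'\,d\mu=0$ is thus redundant: the constant $L$ to which the uniform averages converge is automatically $\int\log f'\,d\mu$, and you can read off its vanishing a posteriori.
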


The problem of classifying the $C^k$-distorted elements for $k\geq 2$ seems complicated.  Since $C^k$-distortion is a conjugacy invariant, it is natural to separately consider the three different conjugacy types of circle diffeomorphisms: (I) diffeomorphisms with rational rotation number and at least one hyperbolic periodic point; (II) diffeomorphisms with rational rotation number and no hyperbolic periodic point; and (III) diffeomorphisms with irrational rotation number.  Case I is settled by Theorem \ref{thm1} above; such maps are $C^k$-undistorted for every $k\geq1$.

\begin{cor}  For any $k\geq 1$, if $f\in\Diff_+^k(\mathbb{S}^1)$ (resp. $f\in\Diff_+^k(I)$) has a hyperbolic periodic point (resp. hyperbolic fixed point), then $f$ is $C^k$-undistorted.
\end{cor}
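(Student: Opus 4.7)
The plan is to deduce this corollary as an immediate consequence of Theorem \ref{thm1} together with property (4) listed in the introduction. First I would observe that the hypothesis is entirely $C^1$ in nature: having a hyperbolic periodic point involves only the first derivative of some iterate of $f$. Thus if $f \in \Diff_+^k(\mathbb{S}^1)$ has a hyperbolic periodic point, then regarded as a $C^1$ diffeomorphism via the inclusion $\Diff_+^k(\mathbb{S}^1) \subseteq \Diff_+^1(\mathbb{S}^1)$, $f$ still has a hyperbolic periodic point, and so by Theorem \ref{thm1} is undistorted in $\Diff_+^1(\mathbb{S}^1)$.

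Next I would invoke remark (4) with $G = \Diff_+^1(\mathbb{S}^1)$ and $H = \Diff_+^k(\mathbb{S}^1)$. The inclusion $H \hookrightarrow G$ is algebraic, and the $C^k$-topology on $H$ is finer than the subspace $C^1$-topology, so the hypotheses of remark (4) are satisfied. Concretely, the restriction of a maximal pseudometric $d_G$ on $G$ to $H$ is a continuous right-invariant pseudometric that is coarsely dominated by the maximal pseudometric $d_H$ on $H$; therefore a positive lower bound on $d_G(f^n,e)/n$ forces a positive lower bound on $d_H(f^n,e)/n$. Hence $f$ is $C^k$-undistorted. The interval case is handled identically using $\Diff_+^1(I)$ and $\Diff_+^k(I)$, observing that for orientation-preserving $f \in \Diff_+^k(I)$ the endpoints are fixed automatically and the hyperbolic fixed point condition is again $C^1$ in nature.

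There is no serious obstacle. The corollary is really a formality given Theorem \ref{thm1}; its role in the paper is to record that the entire conjugacy class of diffeomorphisms admitting a hyperbolic periodic point is $C^k$-undistorted at every level of smoothness, thereby settling Case I of the trichotomy uniformly in $k$ and motivating the more delicate Cases II and III that follow.
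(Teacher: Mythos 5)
Your proposal is correct and coincides with the paper's (implicit) argument: the corollary is indeed stated without proof precisely because it follows from Theorem \ref{thm1} combined with remark (4) of the introduction, exactly as you describe. Nothing is missing.
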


In this article, we focus especially on building some understanding of case II.  If $f$ has rotation number $p/q\in\mathbb{Q}$ and no hyperbolic periodic point, then $f^q$ has rotation number $0$ and no hyperbolic fixed point.  Clearly $f$ is distorted if and only if $f^q$ is distorted.  So to classify undistorted elements among those with rational rotation number, it suffices only to consider rotation number $0$.  Although we do not have a full classification in any case, we would like to provide some criteria and examples.  

Firstly, given the statement of Theorem 1, it is tempting to conjecture that a smooth diffeomorphism $f$ might necessarily be $C^2$-undistorted if there exists a ``$2$nd-order hyperbolic fixed point'' for $f$, i.e. a fixed point $x\in\mathbb{S}^1$ with $f''(x)\neq 0$.  The following example rules out this possibility.

\begin{prop} \label{thm2}  There exists an analytic circle diffeomorphism $f$ with rotation number $0$, which is $C^k$-distorted for every $k\geq 1$, and whose only fixed point $x\in\mathbb{S}^1$ satisfies $f''(x)\neq 0$.
\end{prop}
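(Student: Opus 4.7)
The plan is to produce an explicit example as a pair of M\"obius transformations of $\mathbb{R}\mathbb{P}^1\cong\mathbb{S}^1$ satisfying a Baumslag--Solitar-style conjugacy relation. Concretely, I take $f(y) = y/(1-y)$ and $h(y)=y/2$, represented in $GL_2^+(\mathbb{R})$ by $\begin{pmatrix}1 & 0 \\ -1 & 1\end{pmatrix}$ and $\begin{pmatrix}1 & 0 \\ 0 & 2\end{pmatrix}$ respectively; both matrices have positive determinant, so $f$ and $h$ are analytic orientation-preserving diffeomorphisms of $\mathbb{R}\mathbb{P}^1$, i.e.\ elements of $\Diff_+^\omega(\mathbb{S}^1)$. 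The first step is to record the qualitative properties of $f$: a direct check, including the behavior at infinity (where $f$ reads $w\mapsto w-1$ in the chart $w=1/y$), gives $y=0$ as the unique fixed point, with $f'(0)=1$ and $f''(0)=2\neq 0$; and since $f$ has a fixed point, $\rot(f)=0$.

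The crucial step is the algebraic identity
\[
hfh^{-1}=f^2,
\]
which is confirmed by a one-line matrix multiplication. Conceptually this reflects that $f$ is the time-$1$ map of the analytic vector field $y^2\partial_y$ on $\mathbb{R}\mathbb{P}^1$ (which has a single double zero at $0$) and that the dilation $h$ rescales this vector field by $2$. Iterating the identity yields $h^n f h^{-n}=f^{2^n}$ for all $n\geq 1$, so that $\langle f,h\rangle\leq\Diff_+^\omega(\mathbb{S}^1)$ satisfies the defining relation of the Baumslag--Solitar group $BS(1,2)$.

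The last step is the standard translation of this self-similarity into a distortion estimate. For any continuous right-invariant pseudometric $d$ on $\Diff_+^k(\mathbb{S}^1)$ or $\Diff_+^{1+AC}(\mathbb{S}^1)$, the length $\ell(g)=d(g,e)$ is subadditive, so
\[
\ell(f^{2^n}) = \ell(h^n f h^{-n})\leq n\,\ell(h)+\ell(f)+n\,\ell(h^{-1})=O(n).
\]
Since $n\mapsto\ell(f^n)$ is subadditive, Fekete's lemma then forces $\ell(f^n)/n\to 0$, which is the defining condition for distortion; hence $f$ is $C^k$-distorted for every $k\geq 1$ (and $C^{1+AC}$-distorted as well). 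I do not anticipate a serious technical obstacle: the only non-routine step is locating the pair $(f,h)$ itself, and this is transparent once one recognizes $f$ and $h$ as a parabolic and a hyperbolic element of $PSL_2(\mathbb{R})$ sharing the fixed point $0$, which automatically generates a copy of $BS(1,2)$ inside $\Diff_+^\omega(\mathbb{S}^1)$.
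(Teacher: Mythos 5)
Your proposal is correct and takes essentially the same route as the paper: both produce $f$ as a parabolic M\"obius map fixing a single point, exploit the Baumslag--Solitar relation $h f h^{-1}=f^2$ (with $h$ a hyperbolic element sharing the fixed point) to get $\ell(f^{2^n})=O(n)$ and hence $\ell(f^n)/n\to 0$, and verify that the unique fixed point is second-order hyperbolic. The only cosmetic differences are the choice of model (you work in the affine chart of $\mathbb{RP}^1$, where the paper pushes the computation into the unit circle in $\mathbb{C}$ via a Cayley transform and verifies $f''\neq 0$ at the fixed point explicitly in a logarithmic chart), and that you phrase the distortion estimate directly with the maximal pseudometric rather than through the intermediary notion of discrete-distortion in the finitely generated group $\langle f,h\rangle$ — but these amount to the same calculation, since $\ell(h^n f h^{-n})\leq n\ell(h)+\ell(f)+n\ell(h^{-1})$ is exactly what the word-length bound encodes.
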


\begin{center}
\begin{figure}[b] \label{fig1}
\renewcommand{\arraystretch}{2}
\begin{tabular}{|c|c|}
\hline
Group & Maximal Pseudometric\\ [2ex]
\hline
$\Diff_+^1(\mathbb{S}^1)$ & $d(f,g)=\displaystyle\sup_{x\in\mathbb{S}^1}|\log f'(x)-\log g'(x)|$\\ [2ex]
\hline
$\Diff_+^{1+AC}(\mathbb{S}^1)$ & $d(f,g)=\displaystyle\int_{\mathbb{S}^1}\left|\dfrac{f''}{f'}-\dfrac{g''}{g'}\right|$\\ [2ex]
\hline
$\Diff_+^1(I)$ & $d(f,g)=\displaystyle\sup_{x\in\mathbb{S}^1}|(\log f'(x)-\log f'(0))-(\log g'(x)-\log g'(0))|$\\ [2ex]
\hline
$\Diff_+^{1+AC}(I)$ & $d(f,g)=\displaystyle\int_{I}\left|\dfrac{f''}{f'}-\dfrac{g''}{g'}\right|$\\ [2ex]
\hline
\end{tabular}
\vspace{.3cm}
\caption{Explicit maximal pseudometrics on Polish groups of diffeomorphisms.  The third and fourth are genuine metrics.  The first and second can be made into maximal metrics by adding a uniform distance term $\displaystyle\sup_{x\in\mathbb{S}^1}|f(x)-g(x)|$.}
\end{figure}
\end{center}

We also wish to exhibit interesting examples of $C^k$-undistorted diffeomorphisms, but we are somewhat hindered by the fact that we currently lack an explicit closed form for a maximal pseudometric on $\Diff_+^k(\mathbb{S}^1)$, $k\geq 2$.  To bridge this difficulty, we employ a Polish group of ``intermediate smoothness,'' introduced in \cite{cohen_2018a}: we denote by $\Diff_+^{1+AC}(\mathbb{S}^1)$ (resp. $\Diff_+^{1+AC}(I)$) the subgroup of $\Diff_+^1(\mathbb{S}^1)$ (resp. $\Diff_+^1(I)$) consisting of diffeomorphisms whose first derivative is absolutely continuous.  We have $\Diff_+^1(\mathbb{S}^1)\geq\Diff_+^{1+AC}(\mathbb{S}^1)\geq\Diff_+^2(\mathbb{S}^1)$, and in \cite{cohen_2018a} the author has shown that the group topology on $\Diff_+^{1+AC}(\mathbb{S}^1)$ refines the $C^1$-topology, but is coarser than the $C^2$-topology.  Thus by our previous remarks, if $f\in\Diff_+^{1+AC}(\mathbb{S}^1)$ is $C^{1+AC}$-undistorted (i.e., undistorted in the Polish group $\Diff_+^{1+AC}(\mathbb{S}^1)$), then it is also $C^k$-undistorted for all $k\geq 2$.  In Section 3, we give a simple criterion for identifying non-distortion at the $C^{1+AC}$ level, and we give the following example.

\begin{thm} \label{thm3}  There exists an analytic circle diffeomorphism $f$ with rotation number $0$, with no hyperbolic fixed point, such that $f$ is $C^{1+AC}$-undistorted (and hence $C^k$-undistorted for all $k\geq 2$).  Moreover, this diffeomorphism may be taken arbitrarily close to identity in the $C^{1+AC}$ topology.
\end{thm}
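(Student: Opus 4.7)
I would prove Theorem~\ref{thm3} by combining a sufficient criterion for $C^{1+AC}$-undistortion with an explicit analytic construction that realizes this criterion.

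\textbf{Criterion.} The maximal length function on $\Diff_+^{1+AC}(\mathbb{S}^1)$ associated to the pseudometric in Figure~\ref{fig1} is $\ell(f) = \int_{\mathbb{S}^1}|\phi_f|$ with $\phi_f = (\log f')'$; hence $\ell(f)$ equals the total variation $V(\log f')$ of $u := \log f'$ on $\mathbb{S}^1$, and by the general principles of the introduction, $f$ is $C^{1+AC}$-undistorted if and only if $\liminf_{n\to\infty} \ell(f^n)/n > 0$. The cocycle identity $\log(f^n)'(x) = \sum_{k=0}^{n-1} u(f^k(x))$ then yields the following simple sufficient criterion: \emph{if $J\subset\mathbb{S}^1$ is an arc that is wandering under $f$ (i.e., $f^k(J)\cap J=\emptyset$ for all $k\neq 0$) and satisfies $V(u|_J) > V(u|_{\mathbb{S}^1\setminus J})$, then $f$ is $C^{1+AC}$-undistorted.} The preimages $f^{-k}(J)$ for $k = 0, 1, \ldots, n-1$ are pairwise disjoint, and on each of them the term $u\circ f^k$ is a homeomorphic reparametrization of $u|_J$ contributing the full variation $V(u|_J)$; the aggregate variation on $f^{-k}(J)$ of the remaining terms $u\circ f^m$ ($m\neq k$) is bounded, via additivity of $V$ on the disjoint union $\bigsqcup_{j\neq 0}f^j(J)\subset\mathbb{S}^1\setminus J$, by $V(u|_{\mathbb{S}^1\setminus J})$. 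Summing the reverse triangle inequality $V(A+B) \geq V(A) - V(B)$ over $k$ gives $\ell(f^n)\geq n[V(u|_J) - V(u|_{\mathbb{S}^1\setminus J})]$.

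\textbf{Construction.} To meet the criterion I would prescribe $u = \log f'$ as a small analytic function on $\mathbb{S}^1$ satisfying: $u(0) = 0$ (making $0$ a non-hyperbolic candidate fixed point), $\int_0^1 e^u\,dt = 1$ (so that $f(x) := \int_0^x e^{u(t)}\,dt$ is a genuine circle diffeomorphism), $f(x) > x$ on $(0,1)$ (giving $f$ rotation number $0$ with $0$ as its unique parabolic fixed point), and the concentration $V(u|_J) > V(u|_{\mathbb{S}^1\setminus J})$ for some short arc $J$ disjoint from $0$ with $|J| < \min_{x\in J}(f(x)-x)$ (making $J$ wandering, since orbits are monotone relative to the single fixed point). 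A natural ansatz is a shifted-and-scaled analytic bump (for instance a Poisson kernel $P_r$ with $r\in(0,1)$ close to $1$), augmented by small corrections imposing the normalization and fixed-point conditions. The overall amplitude of $u$ is a free parameter controlling $\ell(f) = V(u)$, which may be taken arbitrarily small and places $f$ arbitrarily close to $\id$ in the $C^{1+AC}$ topology.

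\textbf{Main obstacle.} The principal technical difficulty is the simultaneous satisfaction of all the required conditions while driving $\ell(f)$ to zero. The tension is that shrinking $\ell(f)$ forces $\min_{x\in J}(f(x)-x)$ small, hence $|J|$ small, hence the peak of $u$ correspondingly sharp; but sharpening an analytic peak tends also to increase $V(u|_{\mathbb{S}^1\setminus J})$ through the long analytic tails, so the excess $V(u|_J) - V(u|_{\mathbb{S}^1\setminus J})$ must be managed delicately. Resolving this balance — which may require a non-obvious choice of $u$ (e.g., several interacting peaks at different scales, or a careful exploitation of the precise analytic structure of the Poisson kernel to guarantee the wandering condition at the right scale) — is the heart of the proof. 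Once such $u$ is constructed, the remaining verifications (analyticity, rotation number, non-hyperbolicity, wanderingness of $J$, and the concentration inequality) are routine.
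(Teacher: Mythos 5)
Your sufficient criterion is correct as far as it goes: if $J$ is wandering and $V(u|_J) > V(u|_{\mathbb{S}^1\setminus J})$ with $u=\log f'$, then the reverse triangle inequality together with additivity of variation over the disjoint $f^{-k}(J)$ does give $\ell(f^n)\geq n\bigl[V(u|_J)-V(u|_{\mathbb{S}^1\setminus J})\bigr]$. But you have identified a sufficient condition, not produced an $f$ realizing it, and you yourself flag the construction as ``the heart of the proof.'' That is the genuine gap: concentrating strictly more than half of the total variation of the analytic function $u$ on a single wandering arc $J$, while simultaneously forcing $u(0)=0$, $\int_0^1 e^u=1$, $f>\mathrm{id}$ on $(0,1)$, $|J|\leq f(c)-c$ for $c$ the left endpoint of $J$, and $\ell(f)$ small, is exactly where the work lies, and the tension you describe (shrinking $\ell(f)$ forces $|J|$ to shrink, forcing the peak to sharpen, which fattens the analytic tails outside $J$) is real and unresolved in your write-up. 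It is not even clear that a single-bump ansatz can ever satisfy your inequality.

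The paper's route uses a different, and in practice far more tractable, key lemma (Lemma~\ref{lemma_1+AC_undistorted}): instead of asking for a \emph{majority of the variation} of $u$ to sit in one wandering arc, it asks only for a \emph{sign condition} on $f''=(\log f')'\cdot f'$ along the entire orbit of some arc. Under that hypothesis the absolute values in $\int\bigl|\sum_{k}(\tfrac{f''}{f'}\circ f^k)(f^k)'\bigr|$ can be dropped outright on that orbit, and a change of variables telescopes to give a clean linear lower bound $n\cdot\sum_i\int_{f^i(a)}^{f^i(b)}\tfrac{f''}{f'}$ with no subtracted error term at all. The construction then chooses $f''=K\psi-c_m\psi^m$ with $\psi(x)=\tfrac12(1-\cos 2\pi x)$: the positive set of $f''$ is $[0,a_m]\cup[b_m,1]$, and $m$ is taken large enough that some arc $[b_m,f(a_m)]$ has its whole two-sided $f$-orbit contained in that positive set. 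Crucially, this example would \emph{not} satisfy your criterion: the variation of $\log f'$ over the fundamental arc there is a small fraction of $V(\log f')$, not a majority; the paper's lemma succeeds precisely because it only sees the sign, not the magnitude. So your proposal takes a genuinely different and more demanding route, and as written it does not reach a proof.
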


This affirmatively answers Question 2 in the article \cite{navas_2018a} of Navas (see Remark \ref{rem_navas}).

Theorem \ref{thm1} implies that the $C^1$-distorted elements of $\Diff_+^1(\mathbb{S}^1)$ comprise a simple closed set; on the other hand Theorem \ref{thm3} implies that the set of distortion elements of $\Diff_+^k(\mathbb{S}^1)$ for $k>1$ is more complicated.

\begin{cor} \label{cor1}  Let $k$ be an integer $\geq2$, or let $k=1+AC$.  Then the set of $C^k$-distorted elements of $\Diff_+^k(\mathbb{S}^1)$ is not closed.  Also, the set of $C^{1+AC}$-distorted elements is not open in $\Diff_+^{1+AC}(\mathbb{S}^1)$.
\end{cor}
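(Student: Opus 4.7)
The plan is to address the two assertions separately, using Theorem \ref{thm3} to supply an undistorted ``fulcrum'' and Yoccoz's analytic linearization theorem to produce distorted perturbations of it.

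For the ``not open'' claim, I would argue directly: the identity $\id\in\Diff_+^{1+AC}(\mathbb{S}^1)$ is torsion and hence $C^{1+AC}$-distorted by our convention, so $\id\in D$. Theorem \ref{thm3} produces $C^{1+AC}$-undistorted analytic diffeomorphisms in every $C^{1+AC}$-neighborhood of $\id$, so $\id$ fails to be an interior point of $D$, which is therefore not open.

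For the ``not closed'' claim in $\Diff_+^k(\mathbb{S}^1)$ with $k\geq 2$ an integer or $k=1+AC$, let $f$ be the analytic $C^k$-undistorted element from Theorem \ref{thm3}, so $\rho(f)=0$. I construct $C^k$-distorted $g_n\to f$ in $C^k$ as follows. Pick a sequence $\alpha_n\to 0^+$ of irrational Brjuno numbers (which form a dense, full-measure subset of $\mathbb{R}$). The map $\beta\mapsto\rho(R_\beta\circ f)$ is continuous and non-decreasing (by lifting to $\mathbb{R}$, where $\widetilde{f}+\beta$ is pointwise monotone in $\beta$) and takes the value $0$ at $\beta=0$; assuming WLOG the parabolic structure of $f$ is arranged so that $\rho(R_\beta\circ f)>0$ for small $\beta>0$ (otherwise replace $f$ by $f^{-1}$), the intermediate value theorem produces $\beta_n\to 0^+$ with $\rho(R_{\beta_n}\circ f)=\alpha_n$. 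Set $g_n:=R_{\beta_n}\circ f$; continuity of composition yields $g_n\to f$ in $C^k$. Each $g_n$ is analytic with Brjuno rotation number $\alpha_n$, so by Yoccoz's theorem there is an analytic diffeomorphism $h_n$ with $g_n=h_n R_{\alpha_n}h_n^{-1}$. Because $(R_{\alpha_n})''/(R_{\alpha_n})'\equiv 0$, the cyclic subgroup $\langle R_{\alpha_n}\rangle$ is bounded in the maximal pseudometric of Figure \ref{fig1}, so $R_{\alpha_n}$ is $C^k$-distorted; and since $h_n$ is analytic (hence in $\Diff_+^k(\mathbb{S}^1)$) and conjugation is a topological-group automorphism preserving distortion, the conjugate $g_n$ is $C^k$-distorted as well. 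Thus $f\in\overline{D}\cap U$, so $D$ is not closed.

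The main obstacle is the legitimate invocation of Yoccoz's global analytic linearization theorem: once this is granted, the rest (continuity and monotonicity of rotation number, density of Brjuno numbers, and continuity of composition in the relevant topologies) is essentially formal. A secondary technical point is arranging that $\rho(R_\beta\circ f)$ moves off $0$ on one side of $\beta=0$, which holds automatically when the parabolic fixed point of $f$ from Theorem \ref{thm3} is semi-stable on the appropriate side; the symmetry $f\leftrightarrow f^{-1}$ disposes of the other case.
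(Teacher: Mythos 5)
Your proof follows essentially the same route as the paper's: take the undistorted analytic parabolic $f$ from Theorem \ref{thm3}, compose with small rotations to sweep out nearby rotation numbers, hit an arithmetically tame irrational, and conclude the perturbed map is analytically linearizable (the paper cites Arnol'd's Diophantine theorem rather than Yoccoz--Brjuno, but this is cosmetic) and hence $C^k$-distorted by conjugation invariance. The ``not open'' argument is also identical to the paper's. Where the paper uses the explicit property $f(x)>x$ supplied by the construction in Theorem \ref{thm3} to force $\rot(R_\beta\circ f)>0$ for small $\beta>0$, your WLOG replacement $f\leftrightarrow f^{-1}$ is a perfectly valid substitute.

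There is, however, one step in your argument that does not go through as stated: you justify that $R_{\alpha_n}$ is $C^k$-distorted by observing that $\langle R_{\alpha_n}\rangle$ is bounded in the maximal pseudometric of Figure \ref{fig1}. This only handles the case $k=1+AC$ (and $k=1$). Figure \ref{fig1} contains no pseudometric for integer $k\geq 2$, and distortion in $\Diff_+^{1+AC}(\mathbb S^1)$ does not pass up to $\Diff_+^k(\mathbb S^1)$ (remark (4) in the introduction runs the other way: undistortion passes down to subgroups with finer topology; distortion passes up). The correct, equally short justification is that the rotations form a compact subgroup of $\Diff_+^k(\mathbb S^1)$ for every $k$, so $\{R_{\alpha_n}^m : m\in\mathbb Z\}$ is coarsely bounded and $R_{\alpha_n}$ is distorted there; alternatively, Remark \ref{rem_discrete} already records that rigid rotations are discrete-distorted, hence $C^k$-distorted for every $k$. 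With that patch, your argument is correct and essentially coincides with the paper's.
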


We would like to know if it is always possible to find more and more undistorted diffeomorphisms at higher degrees of smoothness.

\begin{question}  For each integer $k\geq 2$, does there exist a diffeomorphism which is $C^k$-distorted but $C^{k+1}$-undistorted?
\end{question}

In Section 4, we prove the maximality of (pseudo-) metrics we use for $\Diff_+^{1+AC}(\mathbb{S}^1)$ and $\Diff_+^{1+AC}(I)$, which are listed for the reader in Figure \ref{fig1}.  We denote each metric simply $d$, since the choice of metric is always clear in context.  We also explicitly compute the quasi-isometry type of these groups.

\begin{thm}  Each of the groups $\Diff_+^{1+AC}(\mathbb{S}^1)$ and $\Diff_+^{1+AC}(I)$ is quasi-isometric to a hyperplane of the Banach space $L^1(I)$.
\end{thm}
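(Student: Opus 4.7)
The plan is to exhibit, for each group $G$, an explicit isometric identification with a subspace of $L^1(I)$ via the map $\Phi(f) = f''/f'$, and then invoke a classical Banach-space fact for the interval case. The map $\Phi: G \to L^1$ is well defined because $f'$ is continuous and strictly positive on its compact domain (hence bounded below) and $f''$ is integrable by absolute continuity of $f'$. By direct inspection of the formulas in Figure \ref{fig1}, $d(f, g) = \|\Phi(f) - \Phi(g)\|_{L^1}$, so $\Phi$ is an isometric embedding of the pseudometric space $(G, d)$ into $L^1$.

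Next I identify the image. For $f \in \Diff_+^{1+AC}(\mathbb{S}^1)$ the derivative $f'$ is $1$-periodic, so $\int_{\mathbb{S}^1} f''/f' = \int (\log f')' = 0$, and $\Phi$ maps into the closed hyperplane $H_0 = \{u \in L^1(\mathbb{S}^1) : \int u = 0\}$. For surjectivity onto $H_0$, given $u \in H_0$ I set $h(x) = \int_0^x u$ (absolutely continuous with $h(1) = 0$, hence periodic), form the AC function $e^h$, and define $f(x) = \int_0^x C e^{h(t)}\, dt$ with $C = 1 / \int_0^1 e^h$; one checks that $f \in \Diff_+^{1+AC}(\mathbb{S}^1)$ and $\Phi(f) = u$. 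An identical construction, dropping the periodicity, shows that $\Phi$ bijects $\Diff_+^{1+AC}(I)$ with all of $L^1(I)$. This already settles the circle case, since $\Phi$ furnishes an isometric (and so quasi-isometric) identification with the hyperplane $H_0$ of $L^1(\mathbb{S}^1) \cong L^1(I)$.

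For the interval case the construction yields an isometry of $\Diff_+^{1+AC}(I)$ with all of $L^1(I)$, so it remains to remark that $L^1(I)$ is itself quasi-isometric to any of its closed hyperplanes. This follows from the classical Banach-space isomorphism $L^1(I) \cong L^1(I) \oplus L^1(I)$ (obtained by partitioning $I$ into two subintervals) together with Pelczynski's decomposition method, which yields $L^1(I) \cong H$ as Banach spaces for any closed hyperplane $H \subseteq L^1(I)$; any such isomorphism is bi-Lipschitz, and in particular a quasi-isometry. The main technical step in the argument is the regularity verification in the surjectivity claim --- in particular, that the constructed $f'$ is genuinely absolutely continuous rather than merely continuous --- which reduces to the stability of AC under composition with a function Lipschitz on bounded sets, applied to $\exp$ on the bounded continuous image of $h$.
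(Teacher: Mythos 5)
Your construction of $\Phi(f)=f''/f'$ and the identification of its image---a hyperplane of $L^1(\mathbb{S}^1)$, respectively all of $L^1(I)$, with the regularity of the inverse checked via stability of absolute continuity under composition with $\exp$---is correct and matches what the paper does in Lemma~\ref{1+AC_mapping}. The Banach-space step at the end is also fine: you invoke Pe{\l}czy\'nski's decomposition method where the paper writes down a concrete projection, but both yield $L^1(I)\cong L^1(I)\oplus\mathbb{R}$.

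The gap is earlier, and it is the heart of the matter. You take the formula $d(f,g)=\int|f''/f'-g''/g'|$ from Figure~\ref{fig1} and immediately treat $(G,d)$ as carrying the canonical quasi-isometry type of $G$. But the caption ``Maximal Pseudometric'' in that figure is precisely the assertion that Section~4 must prove; it cannot be used as an input here. An isometry $(G,d)\hookrightarrow L^1$ says nothing about the quasi-isometry type of $G$ until $d$ is known to be maximal---after all, \emph{every} continuous right-invariant pseudometric is trivially isometric to its image in some metric space. The real content of the paper's proof is establishing maximality via Rosendal's criterion (right-invariance, coarse properness, large-scale geodesicity). Right-invariance is easy from the cocycle identity for $f''/f'$, and large-scale geodesicity follows from your isometry argument since the image of $\Phi$ is a convex, hence geodesic, subset of $L^1$. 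But coarse properness---that every $d$-bounded set is coarsely bounded in the \emph{group-theoretic} sense---requires genuine work (Lemma~\ref{lemma_1+AC_coarsely_bounded}): the paper interpolates $f$ to the identity along $f_i=\tfrac{i}{N}\id+(1-\tfrac{i}{N})f$ and estimates $\int|f_i''/f_i'-f_{i-1}''/f_{i-1}'|$ pointwise to show any $d$-bounded $A$ sits in $U^N$ for every identity neighbourhood $U$. That step is missing entirely from your proposal, and without it the theorem is not proved.
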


Lastly, we would like to make some remarks on case III, the problem of classifying $C^k$-distorted circle diffeomorphisms among those with irrational rotation number, which appears to be related to the classical $C^k$-linearization problems that have inspired a substantial literature.  If $f$ is a $C^k$ circle diffeomorphism with irrational rotation number, then it has been shown by various authors that $f$ is $C^k$-conjugate to a rotation if and only if its set of iterates $\{f^j:j\geq 0\}$ have uniformly bounded derivatives of all orders up through $k$ (see for instance \cite{katznelson_ornstein_1989a} Theorem 2.1).  Combining this with the characterization of coarsely bounded sets provided in \cite{cohen_2016a}, we see that $f$ is $C^k$-conjugate to a rotation if and only if its set of iterates $\{f^j:j\geq0\}$ is a coarsely bounded set in $\Diff_+^k(\mathbb{S}^1)$.  If $f$ is $C^k$-undistorted, then the magnitudes of the derivatives of its iterates are not only unbounded, but in some sense grow linearly-- thus \textit{a priori}, to be $C^k$-undistorted is a strong way of being non-$C^k$-linearizable.

Arnol'd \cite{arnold_1961a} showed that if $f$ is an analytic circle diffeomorphism whose rotation number $\alpha$ satisfies a certain Diophantine condition (namely, that $|\alpha-\frac{p}{q}|>\frac{K}{q^{2+\beta}}$ for some constants $K,\beta$, for all $\frac{p}{q}\in\mathbb{Q}$), then $f$ is analytically conjugate to the rigid rotation of the circle through angle $\alpha$.  Since rotations are distorted and distortion is a conjugacy invariant, we see that all such diffeomorphisms $f$ are distorted in $\Diff_+^k(\mathbb{S}^1)$, for all $k\geq 1$.  On the other hand, Arnol'd gave examples of analytic diffeomorphisms with irrational rotation number which are not $C^1$-linearizable.  At the moment we are unable to provide an example of an undistorted aperiodic circle diffeomorphism.

\begin{question}  Does there exist a circle diffeomorphism with irrational rotation number which is $C^k$-undistorted, for any $k\geq 2$?
\end{question}

We remark that Lemma 1 of Navas in \cite{navas_2018a} shows that every aperiodic circle diffeomorphism of class $C^{1+AC}$ is $C^{1+AC}$-distorted.

\section{Classification of $C^1$-Distortion}

In this section we fix the pseudometric $d$ on $\Diff_+^1(\mathbb{S}^1)$ listed in Figure \ref{fig1}; its maximality is proven in the appendix.  So to determine if a diffeomorphism $f$ is $C^1$-distorted, we want to compute whether the distance $d(f^n,e)$ grows sublinearly, i.e. $f$ is $C^1$-distorted if

\begin{center} $\displaystyle\lim_{n\rightarrow\infty}\frac{1}{n}\cdot\sup_{x\in\mathbb{S}^1}|\log (f^n)'(x)|=\lim_{n\rightarrow\infty}\frac{1}{n}\cdot\sup_{x\in\mathbb{S}^1}\left|\sum_{k=0}^{n-1}\log f'(f^k(x))\right|=0$.
\end{center}
\vspace{.3cm}

Let us make some remarks to clarify the relationship of this notion of distortion to some that have appeared previously in the literature (see also Remark \ref{rem_navas} in the next section).

\begin{rem} In the classic text \cite{demelo_vanstrien_1993a} \S I.2, Demelo and van Strien define the \textit{distortion} of a circle diffeomorphism $f$ to be the quantity

\begin{center} $\mbox{\rm Dist}(f,\mathbb{S}^1)=\displaystyle\sup_{x,y\in\mathbb{S}^1}(\log f'(x)-\log f'(y))$.
\end{center}
\vspace{.3cm}

They then look for uniform bounds on the distortion of iterates $\mbox{\rm Dist}(f^n,\mathbb{S}^1)$ in order to prove Denjoy's theorem and many other results.  Note that by the mean value theorem, there is $y\in\mathbb{S}^1$ with $f'(y)=1$, and therefore $\mbox{\rm Dist}(f^n,\mathbb{S}^1)\geq d(f^n,e)$.  Thus if $\mbox{\rm Dist}(f,\mathbb{S}^1)$ is uniformly bounded by a constant $K$ for all $n$, so too is $d(f^n,e)$.  This means if a diffeomorphism $f$ has uniformly bounded distortion in the sense of Demelo-van Strien, then the iterates $\{f^n:n\in\mathbb{N}\}$ comprise a coarsely bounded set in $\Diff_+^1(\mathbb{S}^1)$, and hence $f$ is $C^1$-distorted in our sense.  The converse is not true, except when $f$ is $C^1$-conjugate to a rotation.
\end{rem}

\begin{rem} \label{rem_discrete} In \cite{calegari_freedman_2006a}, Calegari and Freedman called a circle diffeomorphism $f$ \textit{distorted} if there exists a finitely generated subgroup $\Gamma\leq\Diff_+^1(\mathbb{S}^1)$ in which the subgroup inclusion $\langle f\rangle\hookrightarrow\Gamma$ is not a quasi-isometric embedding.  A similar definition is used for surface diffeomorphisms by Franks and Handel \cite{franks_handel_2006a} and by Militon \cite{militon_2014a}.  If such a group $\Gamma$ exists, in this paper we will say that $f$ is \textit{discrete-distorted}.  Calegari and Freedman showed that each rigid rotation of the circle is discrete-distorted.

Equipping $\Gamma$ with the discrete topology, and applying remark (4) from our introduction, we see that a circle diffeomorphism which is discrete-distorted is also $C^k$-distorted for every $k\geq 1$.  Contrapositively, it is interesting to construct $C^k$-undistorted diffeomorphisms (which we do in Section 3), because they cannot embed non-quasi-isometrically into any finitely generated subgroup of $\Diff_+^k(\mathbb{S}^1)$.
\end{rem}

We now turn to our classification of $C^1$-distortion in the sense of maximal metrics.

\begin{proof}[Proof of Theorem \ref{thm1}]  We argue only for the case of the circle; the arguments for interval diffeomorphisms are essentially identical.\\

\textit{Case I: $f$ has a hyperbolic periodic point.}  If $f^q(x_0)=x_0$ and $(f^q)'(x_0)=K\neq 0$ for some $q\geq1$ and $x_0\in\mathbb{S}^1$, then by applying the chain rule, we have

\begin{align*}
\displaystyle\lim_{n\rightarrow\infty}\frac{d(f^n,e)}{n} &= \lim_{n\rightarrow\infty}\frac{d(f^{qn},e)}{qn}\\
&\geq \displaystyle\lim_{n\rightarrow\infty}\frac{|\log ((f^q)^n)'(x_0)|}{qn}\\
&= \displaystyle\lim_{n\rightarrow\infty}\frac{1}{qn}\left|\sum_{i=0}^{n-1} \log (f^q)'(f^{qn}(x_0))\right|\\
&= \frac{1}{q}\cdot|K|>0,
\end{align*}

\noindent so $f$ is $C^1$-undistorted.\\

\textit{Case II: $\rot(f)=p/q\in\mathbb{Q}$ and $f$ has no hyperbolic periodic point.}  As we mentioned in the introduction, $f$ is distorted if and only if $f^q$ is distorted; for this reason, it suffices for us to assume that $\rot(f)=0$ and $f$ has no hyperbolic fixed point, by replacing $f$ with $f^q$ if necessary.

First let us establish that if $[a,b]$ is any subarc of $\mathbb{S}^1$ with $f(a)=a$, $f(b)=b$, $f'(a)=f'(b)=1$ and $f(x)\neq x$ for all $x\in(a,b)$, then

\begin{center} $\displaystyle\lim_{n\rightarrow\infty}\frac{1}{n}\cdot\sup_{x\in[a,b]}|\log(f^n)'(x)|=0$.
\end{center}
\vspace{.3cm}

To see this, let $\epsilon>0$.  Choose $y\in(a,b)$ arbitrarily and set $J_0=[y,f(y)]$ or $J_0=[f(y),y]$ (depending on the order of $y$ and $f(y)$.  Set $J_k=f^k(J_0)$ for $n\in\mathbb{Z}$, so $[a,b]=\bigcup_{j=-\infty}^\infty J_k$.  Let $M_k=\sup\{|\log f'(x)|:x\in J_k\}$.  Note that $\displaystyle\lim_{k\rightarrow\infty}M_k=\lim_{k\rightarrow-\infty}M_k=0$ by the continuity of $f'$.  So we may find $N\in\mathbb{N}$ so large that $M_n<\epsilon$ for all $n\in\mathbb{Z}$ with $|n|>N$.  For any $x\in [a,b]$, there are at most $2N+1$ points $f^k(x)$ in the orbit of $x$ which satisfy $f^k(x)\in\bigcup_{j=-N}^N J_j$, and therefore $|\log (f^n)'(x)|=\left|\displaystyle\sum_{k=0}^{n-1}\log f'(f^k(x))\right|\leq \sum_{j=-N}^N M_j+(n-2N-1)\epsilon=K+n\epsilon$.  Thus $\displaystyle\lim_{n\rightarrow\infty}\frac{1}{n}\cdot\sup_{x\in[a,b]}|\log(f^n)'(x)|\leq\lim_{n\rightarrow\infty}\frac{K}{n}+\epsilon=\epsilon$.  Since $\epsilon$ was arbitrary, this proves the claim above.

Now for the sake of a contradiction, suppose $\displaystyle\lim_{n\rightarrow\infty}\frac{d(f^n,e)}{n}>K>0$, for some constant $K$.  It means that for each sufficiently large $n$, there exists a point $x_n\in \mathbb{S}^1$ so that $|\log (f^n)'(x_n)|\geq Kn$.  By passing to a subsequence, without loss of generality we may assume $\log (f^{n_i})'(x_{n_i})\geq Kn_i$ for all $i\geq 1$.  (If there are only finitely many positive terms $\log (f_n)'(x_n)$, then there are infinitely many negative ones, in which case we can replace $f$ with $f^{-1}$ to find our subsequence).  Since $\displaystyle\sum_{j=0}^{n_i-1}\log f'(f^{j}(x_{n_i}))\geq Kn_i$, we deduce there exists some $j_i\in\{0,...,n_i-1\}$ for which $\log f'(f^{j_i}(x_{n_i}))\geq K$.

Let $\{I_k\}$ denote the countable set of all maximal open subintervals of $\mathbb{S}^1$ on which $f$ has no fixed point.  By our previous claim, no subsequence of the points $x_{n_i}$ may lay in a single subinterval $I_k$.  Therefore there are infinitely many subintervals $I_k$, and each contains only finitely many of the points $x_{n_i}$.  Consequently, by passing to a further subsequence which we again denote $x_{n_i}$, we assume that $x_{n_i}\in I_{k_i}$ where the intervals $\{I_{k_i}:i\in\mathbb{N}\}$ are pairwise disjoint.  Passing to a subsequence once more, using the compactness of $\mathbb{S}^1$, we assume that $x_{n_i}\rightarrow x_0\in \mathbb{S}^1$.  

For each $i$ let $a_i$ denote the left endpoint of $I_{k_i}$, so $f(a_i)=a_i$.  Note that the points $x_{n_i}$, $f^{j_i}(x_{n_i})$, and $a_i$ all lie in the closure of the same subinterval $I_{k_i}$.  Since the diameters of $I_{k_i}$ tend to $0$ with $i$, we get that $f^{j_i}(x_{n_i})\rightarrow x_0$ and $a_i\rightarrow x_0$ as well.  Since $f'$ is continuous and $f^{j_i}(x_{n_i})\rightarrow x_0$, we have $\log f'(x_0)\geq K$.  Since $f$ is continuous and $a_i\rightarrow x_0$, we get $f(x_0)=x_0$, so $x_0$ is a hyperbolic fixed point after all, a contradiction.\\

\textit{Case III: $\rot(f)=\alpha\notin\mathbb{Q}$.}  In this case $f$ is uniquely ergodic (\cite{walters_2000a} Theorem 6.18), and therefore the functions $\frac{1}{n}\log(f^n)'=\frac{1}{n}\cdot\displaystyle\sum_{i=0}^{n-1}f'\circ f^i$ converge uniformly to some constant $L$ (see \cite{walters_2000a} Theorem 6.19).

Suppose for a contradiction that $L\neq 0$.  Set $\epsilon=|L|/2$.  Then for some $n$, for every $x\in\mathbb{S}^1$, we have\\

\begin{center} $n(L-\epsilon)<\log(f^n)'(x)<n(L+\epsilon)$.
\end{center}
\vspace{.3cm}

Thus, depending on the sign of $L$, we have either $\log(f^n)'$ is $>0$ everywhere or $<0$ everywhere.  In other words either $(f^n)'(x)>1$ for all $x$, or $(f^n)'(x)<1$ for all $x$, a contradiction since $f^n$ is a diffeomorphism.  Therefore $\frac{1}{n}\log(f^n)'\rightarrow0$ uniformly, so $\displaystyle\dfrac{d(f^n,e)}{n}\rightarrow0$ and $f$ is $C^1$-distorted.
\end{proof}

\section{Distortion in Class $C^{1+AC}$}

We now turn to the study of distortion of circle diffeomorphisms in class $C^{1+AC}$.  We fix the metric $d$ on $\Diff_+^{1+AC}(\mathbb{S}^1)$ listed in Figure \ref{fig1}.  For $f,g\in\Diff_+^{1+AC}(\mathbb{S}^1)$, the second derivatives $f''$ and $g''$ are defined almost everywhere, and we note the following relation:

\begin{center} $\dfrac{(f\circ g)''}{(f\circ g)'}=\left(\dfrac{f''}{f'}\circ g\right)\cdot g'+\dfrac{g''}{g'}$.
\end{center}
\vspace{.3cm}

From the above, one deduces the following formula for the compositional iterates of $f$:

\begin{center} $\dfrac{(f^n)''}{(f^n)'}=\displaystyle\sum_{k=0}^{n-1}\left(\dfrac{f''}{f'}\circ f^k\right)\cdot(f^k)'$.
\end{center}
\vspace{.3cm}

So $f$ is $C^{1+AC}$-distorted if and only if $\displaystyle\lim_{n\rightarrow\infty}\frac{1}{n}\int_{\mathbb{S}^1}\left|\sum_{k=0}^{n-1}\left(\dfrac{f''}{f'}\circ f^k\right)\cdot(f^k)'\right|=0$.

We are now ready to give the example promised in Proposition \ref{thm2}, which is essentially the same as Example 1.12 in \cite{franks_handel_2006a}.

\begin{thm:thm2}  There exists an analytic circle diffeomorphism $f$ with rotation number $0$, which is $C^k$-distorted for every $k\geq 1$, and whose only fixed point $x\in\mathbb{S}^1$ satisfies $f''(x)\neq 0$.
\end{thm:thm2}

\begin{proof}  For this proof, we imagine $\mathbb{S}^1$ as the unit circle in the complex plane.  This circle is in bijection, via the map $\varphi(z)=i(1-z)/(1+z)$, with the one-point compactification $\mathbb{R}\cup\{\infty\}$ of the real line.  The group $\mathcal{M}$ of M\"obius transformations, i.e. rational maps of the form $m(r)=(ar+b)/(cr+d)$ with $a,b,c,d\in\mathbb{R}$, acts on $\mathbb{R}\cup\{\infty\}$.  We let $F,G\in\mathcal{M}$ be defined by $F(r)=r/(r+1)$, $G(r)=r/2$, and we let $f=\varphi\circ F\circ \varphi^{-1}$, $g=\varphi\circ G\circ\varphi^{-1}$, so $f$ and $g$ are analytic circle diffeomorphisms.  The map $F$ has a single fixed point at $r=0$, so $f$ has a single fixed point at $z=\varphi^{-1}(0)=1\in\mathbb{S}^1$.

Let $\ell$ denote the word length function in the finitely generated group of circle diffeomorphisms $\Gamma=\langle f,g\rangle$, which is isomorphic to $\langle F,G\rangle\leq\mathcal{M}$.  We note that $gfg^{-1}=f^2$, so $g^nfg^{-n}=f^{2^n}$ for each $n\geq 1$.  Therefore $\frac{\ell(f^{2^n})}{2^n}\leq \frac{2n+1}{2^n}\rightarrow 0$ as $n\rightarrow\infty$, so $f\hookrightarrow\Gamma$ is not a quasi-isometric embedding.  Thus $f$ is discrete-distorted, and hence $C^k$-distorted for every $k$.

To check that the second derivative of our map is nonzero at $z=1$, we choose a chart $\psi$ in a neighborhood $U$ of $z=1\in\mathbb{S}^1$ defined by $\psi(z)=-i\log z$, where here $\log$ denotes an appropriate branch of the complex logarithm.  Then we want to verify that the second derivative of $\psi\circ f\circ \psi^{-1}$ evaluated at $x=\psi(1)=0$ is nonzero.  For this, we compute directly that\\

\begin{center} $f(z)=\dfrac{z-1+2iz}{z-1+2i}$, $f'(z)=\dfrac{-4}{(z-1+2i)^2}$, and $\frac{\psi''}{\psi'}(z)=-\dfrac{1}{z}$,
\end{center}
\vspace{.3cm}

\noindent and therefore\\

\begin{align*}
\frac{(\psi\circ f\circ \psi^{-1})''}{(\psi\circ f\circ\psi^{-1})'}(0) &= \left[\left(\frac{\psi''}{\psi'}\circ f\right)\cdot f'+\frac{f''}{f'}-\frac{\psi''}{\psi'}\right]\cdot\left(\frac{1}{\psi'}\right)\circ\psi^{-1}(0)\\
&= \left[\left(-\frac{z-1+2i}{z-1+2iz}\cdot\frac{-4}{(z-1+2i)^2}\right)_{z=1}+\left(\frac{-2}{z-1+2i}\right)_{z=1}\right.\\
& \phantom{{}=1}\left.-\left(-\frac{1}{z}\right)_{z=1}\right]\cdot\left(-\frac{i}{z}\right)_{z=1}\\
&= 1\neq 0.
\end{align*}
\end{proof}

%Moreover, since the derivative of $\log (f^n)'$ is $\frac{(f^n)''}{(f^n)'}$, it means $d(f^n,e)=\int_{\mathbb{S}^1}\left|\frac{(f^n)''}{(f^n)'}\right|=V(\log (f^n)')$, the total variation of $\log (f^n)'$ (see \cite{leoni_2009a} Exercise 2.3 (ii)).  So $f$ is $C^{1+AC}$-undistorted if and only if $\frac{1}{n}V(\log (f^n)')\rightarrow0$.  This observation is they key to the following simple lemma.

\begin{rem} \label{rem_navas}  In \cite{navas_2018a}, Navas defines the \textit{asymptotic distortion} of an interval diffeomorphism $f$ whose first derivative has bounded variation to be the quantity

\begin{center} $\displaystyle\lim_{n\rightarrow\infty}\frac{1}{n}V(\log (f^n)')$,
\end{center}
\vspace{.3cm}

\noindent where $V$ denotes the total variation.  In the case that the first derivative of $f$ is absolutely continuous, then the total variation of $\log (f^n)'$ is equal to $d(f^n,e)=\int_I|\frac{f''}{f'}|$ (see \cite{leoni_2009a} Exercise 2.3 (ii)).  So $f$ has nonzero asymptotic distortion in the sense of Navas if and only if $f$ is $C^{1+AC}$-undistorted.  Thus, our example of a $C^{1+AC}$-undistorted analytic circle diffeomorphism with no hyperbolic fixed point in Theorem \ref{thm3} gives a positive answer to Question 2 of \cite{navas_2018a}.
\end{rem}

\begin{lemma}[Criterion for $C^{1+AC}$ Non-Distortion] \label{lemma_1+AC_undistorted}  Let $f\in\Diff_+^{1+AC}(\mathbb{S}^1)$ and suppose that there exists a subarc $[a,b]$ of $\mathbb{S}^1$ with the property that the intervals $[f^i(a),f^i(b)]$ ($i\in\mathbb{Z}$) are pairwise disjoint, and $f''\geq 0$ on $[f^i(a),f^i(b)]$ for every $i\in\mathbb{Z}$ (or $f''\leq 0$ on $[f^i(a),f^i(b)]$ for every $i\in\mathbb{Z}$).  Then

\begin{center}  $\displaystyle\lim_{n\rightarrow\infty}\dfrac{d(f^n,e)}{n}\geq \left|\sum_{i\in\mathbb{Z}}\int_{f^i(a)}^{f^i(b)}\dfrac{f''}{f'}\right|$.
\end{center}
%\vspace{.3cm}

%In particular, $f$ is $C^{1+AC}$-undistorted.
\end{lemma}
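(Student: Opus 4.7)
My plan is to lower-bound $d(f^n,e) = \int_{\mathbb{S}^1}|(f^n)''/(f^n)'|\,dx$ by summing its contributions over a chain of iterated subarcs $[f^{-m}(a), f^{-m}(b)]$ for $m = 0, 1, \ldots, n$, and then to verify that this cumulative contribution grows linearly in $n$ with the correct rate. I will work in the case $f'' \geq 0$ on each $[f^i(a), f^i(b)]$ (the opposite-sign case is symmetric, either by passing to $f^{-1}$ or by flipping signs throughout) and write $s_j := \int_{f^j(a)}^{f^j(b)} f''/f' \geq 0$, so that the right-hand side of the claim is $S := \sum_{j\in\mathbb{Z}} s_j$.

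The key observation is that on each subarc $[f^{-m}(a), f^{-m}(b)]$, the function $(f^n)''/(f^n)'$ is pointwise non-negative: invoking the composition formula
$$\frac{(f^n)''}{(f^n)'} \;=\; \sum_{k=0}^{n-1}\left(\frac{f''}{f'}\circ f^k\right)(f^k)'$$
recalled at the start of this section, each of the $n$ summands is non-negative on this subarc since $f^k$ maps it into $[f^{k-m}(a), f^{k-m}(b)]$, where $f''\geq 0$ by hypothesis. The change of variable $y = f^k(x)$ in each summand then yields the clean identity
$$\int_{f^{-m}(a)}^{f^{-m}(b)} \left|\frac{(f^n)''}{(f^n)'}\right| dx \;=\; \sum_{k=0}^{n-1} \int_{f^{k-m}(a)}^{f^{k-m}(b)} \frac{f''}{f'}\,dy \;=\; \sum_{j=-m}^{n-1-m} s_j.$$

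Next, assuming that the subarcs $[f^{-m}(a), f^{-m}(b)]$ for $m = 0, 1, \ldots, n$ are pairwise disjoint (which can be arranged by shrinking $[a,b]$ so that $b < f(a)$, whereupon the iterates of $[a,b]$ form a monotone chain between two fixed points), additivity of the integral over $\mathbb{S}^1$ gives $d(f^n,e) \geq \sum_{m=0}^{n} \sum_{j=-m}^{n-1-m} s_j$. Interchanging summation order, an elementary count shows this double sum equals $\sum_{j=0}^{n-1}(n-j)\,s_j + \sum_{j=-n}^{-1}(n+j+1)\,s_j$, with each $s_j$ weighted by a coefficient $\sim n$ for $|j| \ll n$. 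Dividing by $n$, every weight $(1-j/n)$ or $(1+(j+1)/n)$ tends to $1$ pointwise as $n\to\infty$, so Fatou's lemma (valid since $s_j \geq 0$) produces $\liminf_{n\to\infty} d(f^n,e)/n \geq \sum_{j\in\mathbb{Z}} s_j = S$; the limit exists by Fekete's subadditivity applied to the subadditive sequence $n \mapsto d(f^n,e)$, so the claim follows.

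The main subtlety I foresee is the disjointness of the orbital iterates, which is precisely what converts additivity of the integral into a \emph{linear}-in-$n$ lower bound: using only the single subarc $[a,b]$ gives $d(f^n,e) \geq \sum_{k=0}^{n-1} s_k$, which is $O(1)$ whenever the series $\sum s_j$ converges, and hence vanishes on division by $n$. The full chain of $n+1$ disjoint iterates is indispensable, and the small-arc / chain-of-iterates setup is implicit in the contexts where the lemma will be applied in the sequel. Once the chain is in place, everything else is routine bookkeeping.
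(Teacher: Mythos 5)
Your proof is correct and it rests on the same two ingredients as the paper's: the cocycle identity
$\frac{(f^n)''}{(f^n)'}=\sum_{k=0}^{n-1}\bigl(\frac{f''}{f'}\circ f^k\bigr)(f^k)'$,
and the observation that every summand has the same sign on each iterated subarc so the absolute values can be dropped and a change of variables applied. The only real difference is bookkeeping. The paper sums over \emph{all} $i\in\mathbb{Z}$ at once and then re-indexes: for each fixed $k$, the sum $\sum_{i\in\mathbb{Z}}\int_{f^{i+k}(a)}^{f^{i+k}(b)}\frac{f''}{f'}$ is independent of $k$, so the double sum collapses exactly to $n\cdot\sum_{i\in\mathbb{Z}}\int_{f^i(a)}^{f^i(b)}\frac{f''}{f'}$ with no passage to the limit needed inside. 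You instead truncate to the finite chain $m=0,\dots,n$, obtain weights $w_n(j)\in[0,1]$ with $w_n(j)\to 1$, and invoke Fatou to recover the same constant. This costs you one extra step and buys you nothing the full $\mathbb{Z}$-sum doesn't already give, but it's perfectly sound.

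One thing you do that the paper does not: you explicitly flag that the lower bound $d(f^n,e)\geq\sum\int_{f^i(a)}^{f^i(b)}|\cdot|$ requires the subarcs $[f^i(a),f^i(b)]$ to be pairwise essentially disjoint (equivalently $b\leq f(a)$, or $b\geq f(a)$ if $f$ moves points the other way). The paper's inequality $\int_{\mathbb{S}^1}|\cdot|\geq\sum_{i\in\mathbb{Z}}\int_{f^i(a)}^{f^i(b)}|\cdot|$ silently uses exactly the same disjointness, and it is not among the stated hypotheses of the lemma. In the one application in the paper the chosen arc $[b_m,f(a_m)]$ does satisfy $f(b_m)>f(a_m)$, so the hypothesis holds there; but you are right that as stated the lemma needs this assumption (or one should replace the right-hand side by the corresponding integral over the disjointified chain). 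Flagging it is a genuine improvement in rigor, even though your suggested fix of ``shrinking $[a,b]$'' proves a correspondingly weaker conclusion, not the literal inequality as printed.
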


\begin{rem}  Any $f$ which satisfies the criterion of Lemma \ref{lemma_1+AC_undistorted} necessarily has periodic points.  For if not, then $f$ is aperiodic of class $C^{1+AC}$, so by Denjoy's theorem $f$ is conjugate to a rotation.  Therefore $\mathbb{S}^1$ is covered by the intervals $[f^i(a),f^i(b)]$, and we get $f''\geq 0$ (or $f''\leq 0$) on all of $\mathbb{S}^1$.  Since $f'>0$, this implies $\frac{f''}{f'}\geq 0$ (resp. $\frac{f''}{f'}\leq 0$) on all of $\mathbb{S}^1$, which in turn implies that $\log f'$ is everywhere increasing (resp. everywhere decreasing), an impossibility for a diffeomorphism.
\end{rem}

\begin{proof}[Proof of Lemma \ref{lemma_1+AC_undistorted}]  Assume $f''\geq 0$ on $[f^i(a),f^i(b)]$ for every $i\in\mathbb{Z}$ (the case $f''\leq0$ is similar).  Since $(f^k)'>0$, we have $\left(\frac{f''}{f'}\circ f^k\right)(f^k)'\geq 0$ on $[f^i(a),f^i(b)]$ for every $i,k\in\mathbb{Z}$ as well.  Therefore\\

\begin{align*}
\lim_{n\rightarrow\infty}\dfrac{d(f^n,e)}{n} &= \lim_{n\rightarrow\infty}\dfrac{1}{n}\int_0^1\left|\dfrac{(f^n)''}{(f^n)'}\right|\\
&\geq \lim_{n\rightarrow\infty}\dfrac{1}{n}\sum_{i\in\mathbb{Z}}\int_{f^i(a)}^{f^i(b)}\left|\dfrac{(f^n)''}{(f^n)'}\right|\\
&\geq \lim_{n\rightarrow\infty}\dfrac{1}{n}\sum_{i\in\mathbb{Z}}\int_{f^i(a)}^{f^i(b)}\left|\sum_{k=0}^{n-1}\left(\dfrac{f''}{f'}\circ f^k\right)(f^k)'\right|\\
&= \lim_{n\rightarrow\infty}\dfrac{1}{n}\sum_{k=0}^{n-1}\sum_{i\in\mathbb{Z}}\int_{f^i(a)}^{f^i(b)}\left(\dfrac{f''}{f'}\circ f^k\right)(f^k)'\\
&= \lim_{n\rightarrow\infty}\dfrac{1}{n}\sum_{k=0}^{n-1}\sum_{i\in\mathbb{Z}}\int_{f^{i+k}(a)}^{f^{i+k}(b)}\dfrac{f''}{f'}\\
&= \lim_{n\rightarrow\infty}\dfrac{1}{n}\cdot n\cdot\sum_{i\in\mathbb{Z}}\int_{f^i(a)}^{f^i(b)}\dfrac{f''}{f'}\\
&= \sum_{i\in\mathbb{Z}}\int_{f^i(a)}^{f^i(b)}\dfrac{f''}{f'}.
\end{align*}
\end{proof}

\begin{thm:thm3}  Let $0<K\leq 2$ be arbitrary.  There exists an analytic diffeomorphism $f$ of $\mathbb{S}^1$ with the following properties:
\begin{itemize}
		\item $f$ has a single fixed point $x_0$ with $f'(x_0)=1$ and $f''(x_0)=0$;
		\item $\int_{\mathbb{S}^1}|f''|\leq 2K$; and
		\item $f$ is $C^{1+AC}$-undistorted.
\end{itemize}
\end{thm:thm3}

\begin{proof}  For this construction, we imagine $\mathbb{S}^1$ as the interval $[0,1]$ with the endpoints identified.  Let 

\begin{center} $\psi(x)=\frac{1}{2}(1-\cos2\pi x)$,
\end{center}
\vspace{.3cm}

\noindent so $\psi:\mathbb{R}\rightarrow\mathbb{R}$ is everywhere nonnegative, $\int_0^1\psi=1$, and $\psi(x)=\psi(1-x)$ for all $x$.  Let $m$ be a fixed (large) positive integer to be determined later.  We define for $x\in\mathbb{R}$:

\begin{align*}
f''(x) &= K\psi(x)-c_m(\psi(x))^m\\
f'(x) &= 1+\int_0^x f''(t)dt\\
f(x) &= \int_0^x f'(t)dt
\end{align*}

\noindent where $c_m$ is a positive constant chosen so that $c_m\int_0^1(\psi(t))^m dt=K$.  It is clear that $f:\mathbb{R}\rightarrow\mathbb{R}$ is real-analytic; we claim that $f'$ is periodic with period $1$, $f'(x)>0$ for all $x$, $f(0)=0$, and $f(1)=1$, so that $f$ induces an analytic circle diffeomorphism which fixes $x_0=0$.

\begin{center}
\begin{figure}[h] \label{fig3} \includegraphics[scale=0.4]{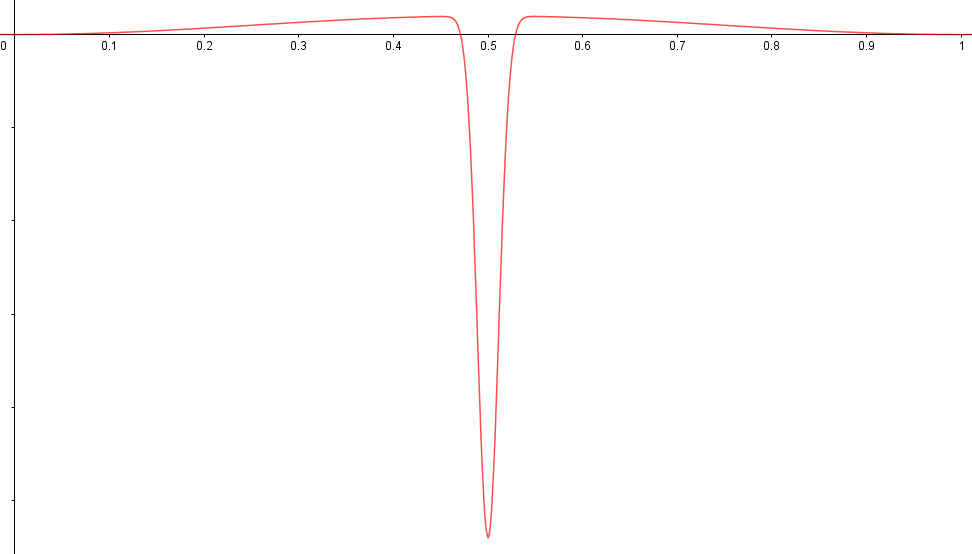}
\caption{The graph of $f''$, for $f$ a $C^{1+AC}$-undistorted circle diffeomorphism.}
\end{figure}
\end{center}

To see this, first observe that $f''$ is periodic with period $1$, satisfies $f''(x)=f''(1-x)$, $\int_0^1 f''(t)dt=0$, and has exactly four zeroes on $[0,1]$ located at $0$, $1$, and the following two points:

\begin{align*}
a_m &= \frac{1}{2\pi}\arccos\left(1-2\left(\frac{K}{c_m}\right)^{1/(m-1)}\right) &\in\left(0,\frac{1}{2}\right)\\
b_m &= 1-a_m &\in\left(\frac{1}{2},1\right)
\end{align*}
\vspace{.3cm}

It follows from these observations that $f'$ is periodic with period $1$, satisfies $f'(x)-1=-f'(1-x)+1$ with $f'(0)=f'(1)=0$, and is strictly increasing on $[0,a_m]$ and $[b_m,1]$, and strictly decreasing on $[a_m,b_m]$.  So $f'$ achieves its maximum at $a_m$ where $f'(a_m)<1+\int_0^{a_m} K\psi(x)dx< 1+\int_0^{1/2} K\psi(x)dx=1+K/2\leq 2$, and its minimum at $b_m$ where $f'(b_m)=2-f'(a_m)>0$.  This shows that $f'(x)>0$ for all $x$.  Lastly, it is clear that $f(0)=0$, and we have $f(1)=\int_0^{1/2}f'(x)dx+\int_{1/2}^1 f'(x)dx=\int_0^{1/2}f'(x)dx+\int_{1/2}^1(2-f'(1-x))dx=\int_0^{1/2}f'(x)dx-\int_{1/2}^0(2-f'(x))dx=1$.  So $f$ induces a circle diffeomorphism as claimed, which for simplicity we denote again by $f$.

Next we need an estimate on the size of $c_m$.  Observe, using the Taylor series for sine at $t=1/2$, that\\

\begin{align*}
\int_0^1(\psi(t))^mdt &= 2\int_{1/2}^{1}\left[\frac{1}{2}(1-\cos 2\pi t)\right]^mdt\\
&> 2\int_{1/2}^{1/2+1/\sqrt{m}}(\sin \pi t)^{2m}dt\\
&\geq 2\int_{1/2}^{1/2+1/\sqrt{m}}\left(1-\pi^2\left(t-\frac{1}{2}\right)^2\right)^{2m} dt\\
&\geq 2\int_{1/2}^{1/2+1/\sqrt{m}}\left(1-2m\pi^2\left(t-\frac{1}{2}\right)^2\right) dt\\
&= 2\left(\frac{1}{\sqrt{m}}+\frac{2\pi^2}{3\sqrt{m}}\right)\\
&> \frac{K}{\sqrt{m}},
\end{align*}
\vspace{.3cm}

\noindent and hence $c_m<\sqrt{m}$.  It follows that $a_m\geq \frac{1}{2\pi}\arccos\left(1-2\left(\frac{K}{\sqrt{m}}\right)^{1/(m-1)}\right)$, and since the terms on the right side of the inequality tend to $\frac{1}{2}$ as $m\rightarrow\infty$, we also have $a_m\rightarrow\frac{1}{2}$ from the left, and $b_m\rightarrow\frac{1}{2}$ from the right.

Set $p=\frac{20-K}{40}$.  Since we have $f''(x)\geq K\psi(x)-c_m p^m$ on $[0,p]$, we see that $f''\rightarrow K\psi$ uniformly from below on $[0,p]$ as $m\rightarrow\infty$.  Consequently, as $m\rightarrow\infty$, we have\\

\begin{center} $f\rightarrow \int_0^x(1+\int_0^t K\psi(s)ds)dt=x+\frac{K}{4}x^2+\frac{K}{8\pi^2}\cos2\pi x$ uniformly.
\end{center}
\vspace{.3cm}

The limiting map above sends $\frac{20-K}{40}$ to $\frac{1200-80K+K^2}{1600}+\frac{K}{8\pi^2}\cos 2\pi p>\frac{1200-80K}{1600}-\frac{K}{8\pi^2}>\frac{1200-80K}{1600}-\frac{K}{80}=\frac{1200-100K}{1600}\geq\frac{5}{8}>\frac{20+K}{40}>1-p$.  Therefore we may choose $m$ to be so large that\\

\begin{itemize}
		\item $f''$ is so uniformly close to $K\psi$ on $[0,p]$ that $f(p)>1-p$, and
		\item $a_m>p$ and $b_m<1-p$.
\end{itemize}
\vspace{.3cm}

In this way we guarantee that $f(a_m)>f(p)>p-1>b_m$, and so $f''$ is nonnegative on each interval of the form $[f^i(b_m),f^i(f(a_m))]$ ($i\in\mathbb{Z}$).  Thus Lemma \ref{lemma_1+AC_undistorted} applies and $f$ is undistorted, with\\

\begin{center}  $\displaystyle\lim_{n\rightarrow\infty}\dfrac{d(f^n,e)}{n}\geq \sum_{i\in\mathbb{Z}}\int_{f^i(b_m)}^{f^{i+1}(a_m)}\dfrac{f''}{f'}>0$.
\end{center}

Moreover we have $\int_0^{a_m}f''+\int_{b_m}^1 f''\leq\int_0^1 K\psi=K$.  Since $\int_0^1 f''=0$, we also have $-K\leq\int_{a_m}^{b_m}f''$, and so $\int_{\mathbb{S}^1}|f''|=\int_0^{a_m}f''-\int_{a_m}^{b_m}f''+\int_{b_m}^1f''\leq 2K$.
\end{proof}

\begin{cor:cor1}  Let $k$ be an integer $\geq2$, or let $k=1+AC$.  Then the set of $C^k$-distorted elements of $\Diff_+^k(\mathbb{S}^1)$ is not closed.  Also, the set of $C^{1+AC}$-distorted elements is not open in $\Diff_+^{1+AC}(\mathbb{S}^1)$.
\end{cor:cor1}

\begin{proof}  Take $f$ a $C^{1+AC}$-undistorted analytic diffeomorphism of $\mathbb{S}^1$ with $\rot(f)=0$, $f(x)>x$ for all $x\in(0,1)$, and without hyperbolic fixed points, as in Theorem \ref{thm2}.  Then $f$ is $C^k$-undistorted.  Given $\theta\in[0,1]$, let $R_\theta$ denote the rigid rotation of $\mathbb{S}^1$ through angle $2\pi\theta$. 

For $\epsilon<\displaystyle\sup_{x\in\mathbb{S}^1} (f(x)-x)$, the diffeomorphism $R_\epsilon\circ f$ is fixed-point free, so $\rot(R_\epsilon\circ f)\neq 0$.  Therefore the mapping $\theta\mapsto \rot(R_\theta\circ f)$, $[0,\epsilon]\rightarrow[0,\rot(R_\epsilon\circ f)]$ is a continuous monotone nondecreasing map onto a nontrivial closed subinterval of $[0,1)$ (see \cite{demelo_vanstrien_1993a} Lemma 4.1).  This interval $[0,\rot(R_\epsilon\circ f)]$ contains irrational numbers which satisfy the Diophantine condition of Arnol'd which we mentioned in the introduction.  If $\alpha$ is such a number, and $\delta\in[0,\epsilon]$ is such that $\rot(R_\delta\circ f)=\alpha$, then $R_\delta\circ f$ is analytically conjugate to a rotation.  Hence it is $C^k$-distorted, and since $\epsilon$ was arbitrary we see that distorted elements converge to $f$ in the $C^k$ topology.  This proves the first claim of the corollary.

By Theorem \ref{thm2}, since there are $C^{1+AC}$-undistorted elements arbitrarily close to identity in the $C^{1+AC}$ topology, and the identity is trivially $C^{1+AC}$-distorted, the $C^{1+AC}$-undistorted elements do not form a closed set, which proves the second claim.
\end{proof}

\section{Maximal Pseudometrics and Quasi-Isometry Types}

In this section we establish the maximality of our metrics on the groups $\Diff_+^{1+AC}(M^1)$, where $M^1=I$ or $M^1=\mathbb{S}^1$, and we explicitly compute their quasi-isometry types.  For the necessary background on coarsely bounded sets, maximal pseudometrics, and other concepts relating to the coarse geometry of topological groups, we refer the reader to \cite{rosendal_2018a}.  For the entire section, we think of $\mathbb{S}^1$ as the interval $[0,1]$ with the endpoints identified.

In the article \cite{cohen_2018a}, it was shown that the group $\Diff_+^{1+AC}(M^1)$ admits a unique Polish topology which is metrized by the following:

\begin{center} $\rho(f,g)=\displaystyle\sup_{x\in M^1}|f(x)-g(x)|+\sup_{x\in M^1}|f'(x)-g'(x)|+\int_{M^1}|f''-g''|$.
\end{center}
\vspace{.3cm}

We also topologize the Banach space $L^1(I)$ with its usual norm which we denote by $\|\cdot\|_1$.

\begin{lemma} \label{1+AC_mapping}  The mapping $\Phi:\Diff_+^{1+AC}(M^1)\rightarrow L^1(M^1)$, $\Phi(f)=\frac{f''}{f'}$ is continuous.  If $M^1=I$ then $\Phi$ is a bijection.  If $M^1=\mathbb{S}^1$, then $\Phi$ is a surjection onto the hyperplane of $L^1(\mathbb{S}^1)$ defined by $Y=\{H\in L^1(\mathbb{S}^1):\int_{\mathbb{S}^1}F=0\}$, and the preimage of each point in $Y$ is a left coset of the group of rotations.
\end{lemma}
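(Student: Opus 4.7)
My plan rests on the observation that the equation $\Phi(f) = H$ is equivalent, via the chain rule, to the ODE $(\log f')' = H$ almost everywhere, so existence and uniqueness of preimages will follow by direct integration.

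First I would verify continuity of $\Phi$. Given $\rho(f_n, f) \to 0$, I would use that $f'$ is continuous and positive on the compact set $M^1$, hence bounded below by some $\delta > 0$, and that $f_n' \geq \delta/2$ uniformly for $n$ large; then from
\[
\frac{f_n''}{f_n'} - \frac{f''}{f'} = \frac{f_n'' - f''}{f_n'} + f''\left(\frac{1}{f_n'} - \frac{1}{f'}\right),
\]
the $L^1$ norm of the first term is bounded by $\frac{2}{\delta}\|f_n'' - f''\|_1 \to 0$ and the second by $\|f''\|_1 \cdot \|1/f_n' - 1/f'\|_\infty \to 0$, using the uniform convergence of $f_n'$ to $f'$.

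For the interval case, given $H \in L^1(I)$ I would set $g(x) = \exp(\int_0^x H)$, $C = \int_0^1 g$, and define $f(x) = C^{-1}\int_0^x g$. Then $f' = g/C$ is absolutely continuous and positive, $f'' = Hg/C = Hf'$, and $f(0) = 0$, $f(1) = 1$, so $f \in \Diff_+^{1+AC}(I)$ with $\Phi(f) = H$. For injectivity, I would observe that $\Phi(f) = \Phi(g)$ forces $\log f' - \log g'$ to be absolutely continuous with a.e.\ derivative zero, hence constant, so $f' = cg'$; the boundary conditions $f(0) = g(0) = 0$, $f(1) = g(1) = 1$ then force $c = 1$, whence $f = g$.

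For the circle case (treating elements as maps $[0,1]\to[0,1]$ with $f'$ $1$-periodic), the same construction produces a preimage exactly when $g$ is $1$-periodic, i.e., when $\int_0^1 H = 0$ (so $H \in Y$); necessity follows since $\log f'$ is periodic whenever $f$ is a circle diffeomorphism, forcing $\int_{\mathbb{S}^1}\Phi(f) = 0$. For the fiber description, the same argument gives $f' = cg'$ whenever $\Phi(f) = \Phi(g)$; integrating over $\mathbb{S}^1$ and using that both maps have degree one forces $c = 1$, so $f - g$ is constant mod $1$, i.e.\ $f = R_\theta \circ g$ for some rotation $R_\theta$, and $\Phi^{-1}(H)$ is a coset of the rotation subgroup as claimed. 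The only step demanding care is the continuity argument and its uniform positive lower bound on $f_n'$; the remainder is direct manipulation with absolutely continuous functions.
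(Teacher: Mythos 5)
Your proposal is correct and follows essentially the same path as the paper: continuity via the uniform lower bound on $f_n'$ together with $L^1$ convergence of $f_n''$, surjectivity by the explicit integral construction $f(x)=C^{-1}\int_0^x \exp(\int_0^t H)\,dt$, and the fiber description via constancy of $\log f'-\log g'$. Your injectivity step (integrating the a.e.-zero derivative of the absolutely continuous function $\log f'-\log g'$ directly and invoking the boundary conditions to pin down the constant) is a slightly more streamlined variant of the paper's argument, which instead applies the mean value theorem to $\log(fg^{-1})'$, but the underlying idea is identical.
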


\begin{proof}  If $f_n\rightarrow f$ in the group, it means $(f_n')$ is a sequence of strictly positive functions such that $f_n'\rightarrow f'$ uniformly, and $f_n''\rightarrow f''$ in the $L^1$-metric.  From the first condition we have that $\frac{1}{f_n'}\rightarrow\frac{1}{f'}$ uniformly.  Consequently, $\Phi$ is continuous.  

In case $M^1=I$, let $f,g\in\Diff_+^{1+AC}(I)$ with $f\neq g$.  Then $fg^{-1}\neq e$, so $\log(fg^{-1})'=[\log f'-\log g']\circ g^{-1}$ is not identically zero.  On the other hand the mean value theorem guarantees at least one point $x\in I$ where $\log(fg^{-1})'(x)=0$, so $\log f'-\log g'$ is nonconstant.  Therefore its derivative $\Phi(f)-\Phi(g)$ is nonzero.  This shows $\Phi$ is injective.

We compute the inverse of $\Phi$.  Let $H\in L^1(I)$, and define $F:I\rightarrow\mathbb{R}$, $f:I\rightarrow I$ by the rules\\

\begin{center} $F(x)=\int_0^x H(t)dt$; $C=\int_I exp(F)$; $f(x)=\frac{1}{C}\int_0^x exp(F(t))dt$.
\end{center}
\vspace{.3cm}

We have $f(0)=0$, $f(1)=1$, and $f'>0$, so $f\in\Diff_+^1(I)$.  By construction $F$ is absolutely continuous, and therefore so is $\frac{1}{C}\exp(F)=f'$, so in fact $f\in\Diff_+^{1+AC}(I)$.  It is easy to check that $\Phi(f)=H$.  

In case $M^1=\mathbb{S}^1$, given $H\in Y$, we can construct $f\in\Diff_+^{1+AC}(I)$ as above.  We hope to verify that in fact $f'(0)=f'(1)$, which would imply $f\in\Diff_+^{1+AC}(\mathbb{S}^1)$, ensuring that $\Phi$ is surjective.  For this, observe that $F(1)=0$ since $H\in Y$.  Therefore $F(0)=F(1)$, so $f'(0)=\frac{1}{C}=f'(1)$.

Suppose $f,g\in\Diff_+^{1+AC}(\mathbb{S}^1)$ and $\Phi(f)=\Phi(g)$.  Since $\Phi(f)-\Phi(g)=0$ we have that $[\log f'-\log g']\circ g^{-1}$ is a constant function.  So $(fg^{-1})'$ is constant, whence $fg^{-1}$ is a rigid rotation of the circle.
\end{proof}

The lemma above implies that the pseudometric $d$ on $\Diff_+^{1+AC}(M^1)$ defined by

\begin{center} $d(f,g)=\|\Phi(f)-\Phi(g)\|_1=\displaystyle\int_{M^1}\left|\frac{f''}{f'}-\frac{g''}{g'}\right|$
\end{center}
\vspace{.3cm}

\noindent is continuous.  Also, it is right-invariant (because of the relation at the beginning of Section 3).  We note also that if $M^1=I$, then $d$ is a genuine metric and not merely a pseudometric.

\begin{lemma} \label{lemma_1+AC_coarsely_bounded}  A subset $A\subseteq\Diff_+^{1+AC}(M^1)$ is coarsely bounded if and only if $\displaystyle\sup_{f\in A}\int_{M^1}\left|\frac{f''}{f'}\right|<\infty$.
\end{lemma}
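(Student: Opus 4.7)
The lemma splits into two directions. The forward implication --- coarse boundedness forces $\sup_A\int|f''/f'|<\infty$ --- is essentially formal: the pseudometric $d(f,g)=\int_{M^1}|f''/f'-g''/g'|$ has just been shown to be continuous and right-invariant on $G=\Diff_+^{1+AC}(M^1)$, so any coarsely bounded $A\subseteq G$ must have finite $d$-diameter. Fixing a basepoint $f_0\in A$ (the empty case being trivial) and applying the triangle inequality gives $\sup_{f\in A}\int|f''/f'|=\sup_{f\in A}d(f,e)\leq d(f_0,e)+\mathrm{diam}_d(A)<\infty$.

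The substantive direction is the converse, and my plan is to verify Rosendal's criterion directly: given $\sup_{f\in A}\int|f''/f'|\leq R$ and an open symmetric $V\ni e$, I will produce a finite $F\subseteq G$ and $m\in\mathbb{N}$ with $A\subseteq(FV)^m$. The key tool is the continuous section $\Psi$ of $\Phi$ constructed in Lemma \ref{1+AC_mapping}: from its defining formula $\Psi(H)(x)=C^{-1}\int_0^x\exp F$ one computes directly that $\Phi(\Psi(H))=H$, so for any $f\in A$ with $H:=\Phi(f)$ the path $\gamma(t):=\Psi(tH)$ starts at $e$, ends at $f$ (modulo a rotation, in the circle case), and satisfies $\Phi(\gamma(t))=tH$. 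Partitioning $[0,1]$ into $m$ equal pieces and setting $g_i:=\gamma(i/m)\gamma((i-1)/m)^{-1}$, the composition rule $\Phi(a\circ b)=(\Phi(a)\circ b)\,b'+\Phi(b)$ from Section 3 combined with the change of variables $x=\gamma((i-1)/m)(y)$ yields the crucial estimate $d(g_i,e)=\int|\Phi(g_i)|=\|H\|_1/m\leq R/m$.

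Continuity of $\Psi$ at $0$ (immediate from its formula, since $L^1$-convergence of $H_n$ to $0$ forces uniform convergence of $F_n=\int_0^{\cdot}H_n$ and hence $C^{1+AC}$-convergence of $\Psi(H_n)$ to $e$) produces an $\epsilon>0$ with $\Psi(B_{L^1}(\epsilon))\subseteq V$. Choosing $m>R/\epsilon$ then gives $\Psi(\Phi(g_i))\in V$ for every $i$. In the interval case $\Psi$ is a genuine inverse to $\Phi$, so $g_i=\Psi(\Phi(g_i))\in V$ and $f=g_m\cdots g_1\in V^m$, finishing the proof with $F=\{e\}$.

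The genuine obstacle is the circle case, where $\Phi$ has a one-parameter kernel and $g_i$ agrees with $\Psi(\Phi(g_i))\in V$ only up to a rotation $s_i$. I resolve this by exploiting the compactness of the rotation subgroup $\mathrm{Rot}\cong\mathbb{S}^1$: for any open $V\ni e$, $\mathrm{Rot}$ is covered by finitely many $V$-translates $\mathrm{Rot}\subseteq F_0 V$ with $F_0$ finite. This absorbs every rotation factor appearing in the decomposition $f=r\,g_m\cdots g_1$ (including the single rotation $r$ relating $f$ to $\Psi(\Phi(f))$) into a finite set, and routine bookkeeping collects the resulting product into the standard form $A\subseteq (FV)^{m'}$ for $F:=F_0\cup\{e\}$ and $m'$ a small multiple of $m$, completing the verification of coarse boundedness.
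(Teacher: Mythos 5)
Your proposal is correct, and in the converse direction it takes a genuinely different route from the paper. Where the paper decomposes $f$ along the convex-combination path $f_i=\tfrac{i}{N}\id+(1-\tfrac{i}{N})f$ and then estimates $\bigl|\tfrac{f_i''}{f_i'}-\tfrac{f_{i-1}''}{f_{i-1}'}\bigr|$ pointwise (this requires the uniform bounds $e^{-M}\leq f'\leq e^M$ and yields only an upper bound of order $M/(Ne^{-2M})$), you instead use the section $\Psi$ of $\Phi$ and cut along the path $\gamma(t)=\Psi(t\Phi(f))$, which is the straight line in the $L^1$ coordinates. This gives the \emph{exact} identity $d(g_i,e)=\|\Phi(f)\|_1/m$ rather than a lossy estimate, and it replaces the pointwise computation by one appeal to right-invariance of $d$ (or equivalently the cocycle relation). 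What the path exchange buys you is conceptual simplicity: your decomposition is the geodesic that the maximality argument later exploits, so the bookkeeping is tighter. What you lose is that you must invoke continuity of $\Psi$ at $0$ to pass from $\|\Phi(g_i)\|_1<\epsilon$ to $g_i\in V$, whereas the paper's argument works directly inside any $d$-ball; this is a small price and your justification of that continuity is correct. Both proofs treat the circle case by peeling off the compact rotation subgroup. One small cleanup you can make: since each $\gamma(t)=\Psi(t\Phi(f))$ fixes $0$, every $g_i$ also fixes $0$, and (by the paper's Lemma \ref{1+AC_mapping}) $\Phi$ is injective on the stabilizer of $0$, so in fact $s_i=\id$ for every $i$ --- the only rotation you need to absorb is the single $r$ with $f=r\,\Psi(\Phi(f))$, and the covering $\mathrm{Rot}\subseteq F_0V$ disposes of it exactly as you say.
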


\begin{proof}  Let $A\subseteq\Diff_+^{1+AC}(M^1)$ be a coarsely bounded set, and let $U$ be the $d$-ball about identity of radius $1$.  It means that there is a finite set $F$ and an integer $r$ with $A\subseteq FU^r$.  Since $d$ is right-invariant, $FU^r$ is $d$-bounded, and hence so is $A$.  Therefore $\displaystyle\sup_{f\in A}\int_{M^1}|\frac{f''}{f'}|<\infty$.

On the other hand, suppose $A\subseteq\Diff_+^{1+AC}(M^1)$ satisfies the condition $\displaystyle\sup_{f\in A}\int_{\mathbb{S}^1}\left|\frac{f''}{f'}\right|=\sup_{f\in A}V(\log f')=M<\infty$.  Let us first consider the case where $M^1=I$.  Let $U$ be an arbitrary basic open set, so $U$ is a $d$-ball about identity of radius $\epsilon>0$.  For any given $f\in A$, we have that the total variation of $\log f'$ is $\leq M$, whence $\sup_{x\in I}\log f'(x)\leq M$.  It follows that $e^{-M}\leq f'\leq e^M$.

Let $N$ be a large integer to be determined later.  For each $0\leq i\leq N$, set $f_i=\frac{i}{N}\id+(1-\frac{i}{N})f$, where $\id$ denotes the identity diffeomorphism.  Verify easily that $f_i\in\Diff_+^1(I)$, $f_0=f$, and $f_N=\id$.  Compute that\\

\begin{center} $\dfrac{f_i''}{f_i'}=\dfrac{(N-i)f''}{(N-i)f'+i}$
\end{center}
\vspace{.3cm}

\noindent and therefore for almost every $x\in I$, we have\\

\begin{align*}
\left|\dfrac{f_i''(x)}{f_i'(x)}-\dfrac{f_{i-1}''(x)}{f_{i-1}'(x)}\right| &= \left|\dfrac{-Nf''(x)}{[(N-i)f'(x)+1][(N-i+1)f'(x)+i-1]}\right|\\
&= \left|\dfrac{f''(x)}{f'(x)}\right|\cdot\dfrac{N}{[Nf'(x)+i(1-f')][Nf'(x)+(i-1)(1-f'(x))]}\\
&\leq M\cdot \frac{1}{Ne^{-2M}}.
\end{align*}
\vspace{.3cm}

We choose $N$ so large that the last expression above is $<\epsilon$.  Letting $u_i=f_if_{i-1}$, we see that $d(u_i,e)=d(f_i,f_{i-1})=\int_I|\frac{f_i''}{f_i'}-\frac{f_{i-1}''}{f_{i-1}'}|<\epsilon$, so $u_i\in U$.  Also we have $f=u_1u_2...u_N$, so $f\in U^N$.  Since $f$ was arbitrary, $A\subseteq U^n$, and since $U$ was an arbitrary basic open set, we have shown that $A$ is coarsely bounded.

For the case of $M^1=\mathbb{S}^1$, let $H$ denote the stabilizer of $0$ in $\Diff_+^{1+AC}(\mathbb{S}^1)$, and let $K$ denote the compact subgroup of rotations.  Also define:\\

\begin{center} $A^*=\{a^*\in H:\exists t\in K ~ \exists a\in A ~ ta^*=a\}$.
\end{center}
\vspace{.3cm}

Since $\displaystyle\sup_{f\in A}\int_{\mathbb{S}^1}\left|\frac{f''}{f'}\right|<\infty$, we also have $\displaystyle\sup_{f\in A*}\int_{\mathbb{S}^1}\left|\frac{f''}{f'}\right|<\infty$.  Let $U\subseteq\Diff_+^{1+AC}(\mathbb{S}^1)$ be an open neighborhood of $e$, and let $V=U\cap H$.  $V$ contains a basic open $d$-ball in $H$.  Therefore, repeating the argument as in the interval case, we can find a positive integer $N$ so that $A^{*}\subseteq V^N\subseteq U^N$.  This shows that $A^{*}$ is coarsely bounded.  But $A\subseteq KA^{*}$.  It follows that $A$ is contained in a product of coarsely bounded sets, hence $A$ is coarsely bounded as claimed.
\end{proof}

\begin{thm}  The pseudometric on $\Diff_+^{1+AC}(M^1)$ defined by $d(f,g)=\displaystyle\int\left|\frac{f''}{f'}-\frac{g''}{g'}\right|$ is maximal.
\end{thm}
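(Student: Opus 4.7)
The plan is to invoke Rosendal's characterization of maximal pseudometrics on Polish groups \cite{rosendal_2018a}: a continuous right-invariant pseudometric is maximal if and only if its bounded sets coincide with the coarsely bounded sets. My strategy thus reduces to three verifications: continuity of $d$, right-invariance of $d$, and the coincidence of $d$-bounded with coarsely bounded.

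First, $d$ is continuous because $d(f,g) = \|\Phi(f) - \Phi(g)\|_1$ is the pullback of the $L^1$-norm under the map $\Phi$ of Lemma \ref{1+AC_mapping}, which is continuous. Right-invariance of $d$ follows from the cocycle identity at the start of Section 3: substituting $fh$ and $gh$ into that identity gives
\[
\frac{(fh)''}{(fh)'} - \frac{(gh)''}{(gh)'} = \left(\left(\frac{f''}{f'} - \frac{g''}{g'}\right) \circ h\right) \cdot h',
\]
and the change of variables $y = h(x)$ turns $h'(x)\, dx$ into $dy$, yielding $d(fh, gh) = d(f,g)$.

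Finally, the coincidence of $d$-bounded and coarsely bounded sets is an immediate reinterpretation of Lemma \ref{lemma_1+AC_coarsely_bounded}: since $d(f,e) = \int_{M^1} |f''/f'|$, that lemma already states that a subset $A$ is coarsely bounded if and only if $\sup_{f \in A} d(f,e) < \infty$. Because $d(f,e) < \infty$ for every fixed $f$, the triangle inequality converts this into the statement that $A$ has finite $d$-diameter; thus coarsely bounded sets and $d$-bounded sets coincide.

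The theorem is therefore a bookkeeping corollary of Lemma \ref{lemma_1+AC_coarsely_bounded} combined with Rosendal's criterion, and the only step that might require attention is making sure one is citing the correct form of that criterion. The genuine obstacle --- the explicit identification of coarsely bounded sets inside $\Diff_+^{1+AC}(M^1)$ --- was already handled in the previous lemma, whose substantive content is the construction of short paths decomposing any element of bounded total variation into small pieces via the homotopy $f_i = (i/N)\id + (1 - i/N)f$.
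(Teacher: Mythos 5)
The characterization of maximality you invoke is wrong, and the gap is substantive. Rosendal's criterion (Proposition 2.52 of \cite{rosendal_2018a}, which the paper cites) requires \emph{three} things of a compatible right-invariant pseudometric: coarse properness, right-invariance, \emph{and} that the metric be large-scale geodesic. Coarse properness alone (that $d$-bounded sets coincide with coarsely bounded sets) does not imply maximality. For a counterexample in spirit: on $\mathbb{Z}$ the metric $d(m,n)=\sqrt{|m-n|}$ is right-invariant and coarsely proper (its bounded sets are the finite sets), but it is not maximal, since $\rho(m,n)=|m-n|$ is right-invariant and is not $\leq K\sqrt{|m-n|}+C$. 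The failure is precisely that $\sqrt{|m-n|}$ is not large-scale geodesic: you cannot traverse a distance of $R$ using $O(R)$ steps of $d$-length $\leq 1$.

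What you would still need to show is large-scale geodesicity of $d$ on $\Diff_+^{1+AC}(M^1)$. The paper does this in two steps: first, restricting to the stabilizer $H$ of $0$, the map $\Phi$ of Lemma \ref{1+AC_mapping} is a genuine isometry from $(H,d|_H)$ onto a closed linear subspace of $L^1(M^1)$, and linear subspaces of normed spaces are geodesic; second, the inclusion $(H,d|_H)\hookrightarrow(\Diff_+^{1+AC}(M^1),d)$ is cobounded (in the circle case because every $g$ equals $rh$ with $r$ a rotation and $h\in H$, and $d(g,h)=d(r,e)=0$), so the full group inherits large-scale geodesicity from $H$. You allude to the right mechanism when you mention the short-path homotopy $f_i=(i/N)\mathop{\rm id}+(1-i/N)f$ in the proof of Lemma \ref{lemma_1+AC_coarsely_bounded}, but that argument only shows $A\subseteq U^N$ for a \emph{fixed} coarsely bounded $A$ without controlling $N$ linearly in $\sup_A d(\cdot,e)$; it certifies coarse properness, not geodesicity. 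The isometry-onto-$L^1$ observation is the step that cannot be recovered from Lemma \ref{lemma_1+AC_coarsely_bounded} alone.

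Your verifications of continuity and right-invariance are correct, and the change-of-variables computation for right-invariance matches the paper's implicit argument.
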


\begin{proof}  By \cite{rosendal_2018a} Proposition 2.52, it is enough to show that $d$ is right-invariant, coarsely proper, and large-scale geodesic.

The right-invariance of $d$ follows from the relation at the beginning of Section 4.  By Lemma \ref{lemma_1+AC_coarsely_bounded}, $d$ is coarsely proper.  Let $H$ be the stabilizer of $0$ in $\Diff_+^{1+AC}(M^1)$ and consider the restricted metric $d|_H$.  By Lemma \ref{1+AC_mapping}, $\Phi$ is an isometry of $(H,d|_H)$ onto a closed subspace of $L^1(M^1)$ equipped with its norm metric.  Thus $d|_H$ is a geodesic metric on $H$.  Hence $d|_H$ is a maximal metric on $H$.  The metric space inclusion $(H,d|_H)\rightarrow(\Diff_+^1(M^1),d)$ is cobounded, since if $M^1=I$ then $H$ is the whole diffeomorphism group, whereas if $M^1=\mathbb{S}^1$, then for every $g\in\Diff_+^1(\mathbb{S}^1)$, there is $h\in H$ and a rotation $r$ such that $rh=g$, whence $d(g,h)=d(r,e)=0$.  Therefore $d$ is quasi-isometric to $d|_H$, so $d$ is large-scale geodesic.  This means $d$ is maximal on $\Diff_+^1(M^1)$, and $\Phi$ becomes a quasi-isometry of $\Diff_+^{1+AC}(\mathbb{S}^1)$ onto $Z$.
\end{proof}

\begin{cor}  $\Diff_+^{1+AC}(I)$ is quasi-isometric to $L^1(I)$ via the mapping $\Phi(f)=\frac{f''}{f'}$.
\end{cor}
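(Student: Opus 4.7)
The plan is to read off the corollary directly from the preceding Theorem together with Lemma \ref{1+AC_mapping}. The key observation is that in the interval case, the mapping $\Phi$ is not merely a quasi-isometry but an honest isometric bijection from $(\Diff_+^{1+AC}(I), d)$ onto $(L^1(I), \|\cdot\|_1)$, which makes the statement almost immediate.

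First I would recall that by Lemma \ref{1+AC_mapping}, when $M^1 = I$, the map $\Phi(f) = f''/f'$ is a bijection $\Diff_+^{1+AC}(I) \to L^1(I)$. Next, by the very definition of the metric $d$ given in Figure \ref{fig1}, we have
\[ d(f,g) = \int_I \left|\frac{f''}{f'} - \frac{g''}{g'}\right| = \|\Phi(f) - \Phi(g)\|_1, \]
so $\Phi$ is in fact an isometric bijection between $(\Diff_+^{1+AC}(I), d)$ and $L^1(I)$ equipped with its norm metric.

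Finally I would invoke the preceding Theorem, which asserts that $d$ is a maximal pseudometric on $\Diff_+^{1+AC}(M^1)$ and in particular a maximal metric on $\Diff_+^{1+AC}(I)$ (noted in Lemma \ref{1+AC_mapping} to be a genuine metric, not merely a pseudometric, when $M^1 = I$). Since the quasi-isometry type of a Polish group is represented by any maximal pseudometric, and since an isometric bijection is trivially a quasi-isometry, it follows that $\Phi$ is a quasi-isometry from $\Diff_+^{1+AC}(I)$ onto $L^1(I)$.

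There is essentially no obstacle here: all of the substantive work — the bijectivity of $\Phi$, the isometric identification of $d$ with the $L^1$-norm, and the maximality of $d$ — has already been carried out in Lemma \ref{1+AC_mapping} and the preceding Theorem. The corollary is just the specialization of those facts to $M^1 = I$, where the hyperplane $Y \subset L^1(\mathbb{S}^1)$ from the circle case is replaced by the whole Banach space $L^1(I)$, since no integral constraint is imposed on $f''/f'$ in the interval setting.
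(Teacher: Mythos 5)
Your proposal is correct and is exactly the intended (unwritten) argument: the paper states this corollary without proof as an immediate consequence of Lemma \ref{1+AC_mapping} and the maximality theorem, and your chain of reasoning---$\Phi$ is an isometric bijection of $(\Diff_+^{1+AC}(I),d)$ onto $(L^1(I),\|\cdot\|_1)$, $d$ is maximal, hence $\Phi$ realizes the canonical quasi-isometry type---is precisely that consequence spelled out.
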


\begin{cor}  $\Diff_+^{1+AC}(\mathbb{S}^1)$ is quasi-isometric to $Y=\{H\in L^1(\mathbb{S}^1):\int_{\mathbb{S}^1}F=0\}$ via the mapping $\Phi(f)=\frac{f''}{f'}$.
\end{cor}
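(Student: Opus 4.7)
The plan is to read off the quasi-isometry essentially for free from the preceding theorem together with Lemma \ref{1+AC_mapping}. First, I would observe that by the very definition of the pseudometric $d$ on $\Diff_+^{1+AC}(\mathbb{S}^1)$, one has the tautological identity
\begin{equation*}
d(f,g) = \left\|\Phi(f)-\Phi(g)\right\|_1,
\end{equation*}
so $\Phi$ is already a bona fide isometry from $(\Diff_+^{1+AC}(\mathbb{S}^1),d)$ into $(L^1(\mathbb{S}^1),\|\cdot\|_1)$ — no quasi-isometry estimates are required on either side.

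Next, I would invoke Lemma \ref{1+AC_mapping}, which identifies the image of $\Phi$ as exactly the hyperplane $Y$. Combined with the previous observation, this makes $\Phi$ a surjective isometry from $(\Diff_+^{1+AC}(\mathbb{S}^1),d)$ onto $(Y,\|\cdot\|_1)$, and in particular a quasi-isometry. Since the theorem just proved establishes that $d$ is a maximal pseudometric on the group, the canonical quasi-isometry type of $\Diff_+^{1+AC}(\mathbb{S}^1)$ is represented by $d$, and we conclude that this type coincides with that of $(Y,\|\cdot\|_1)$.

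The one conceptual subtlety — what I would flag as the main point to address — is that $d$ is a genuine pseudometric rather than a metric on $\Diff_+^{1+AC}(\mathbb{S}^1)$: the fiber description in Lemma \ref{1+AC_mapping} together with the formula for $d$ shows that $d(f,g)=0$ precisely when $f$ and $g$ differ by a left rotation. This does not obstruct the conclusion, since quasi-isometry of (possibly pseudo-) metric spaces is defined purely by the standard two-sided linear inequality and is insensitive to collapsing zero-distance pairs. If one prefers a statement strictly in terms of metric spaces, one can first pass to the quotient of $\Diff_+^{1+AC}(\mathbb{S}^1)$ by the rotation subgroup $K$, which under $\Phi$ is canonically identified with $Y$, and the same argument applies unchanged.
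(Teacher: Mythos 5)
Your proposal is correct and matches the paper's (implicit) argument exactly: the paper's preceding theorem already records, in its last line, that $\Phi$ becomes a quasi-isometry of $\Diff_+^{1+AC}(\mathbb{S}^1)$ onto the hyperplane, using precisely the chain of observations you give — $\Phi$ is a tautological isometry for the pseudometric $d$, Lemma~\ref{1+AC_mapping} identifies its image as $Y$, and maximality of $d$ pins down the quasi-isometry type. Your remark on the pseudometric-versus-metric point and the rotation fibers is also the right thing to flag and is handled correctly.
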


\begin{cor}  $\Diff_+^{1+AC}(I)$ and $\Diff_+^{1+AC}(\mathbb{S}^1)$ are quasi-isometric to $L^1(I)$.
\end{cor}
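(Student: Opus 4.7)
The plan is to reduce the claim to a Banach space isomorphism statement. By the two immediately preceding corollaries, it suffices to show that the hyperplane $Y = \{H \in L^1(\mathbb{S}^1) : \int_{\mathbb{S}^1} H = 0\}$ is quasi-isometric to $L^1(I)$. Identifying $L^1(\mathbb{S}^1)$ with $L^1(I)$ isometrically (their underlying measure spaces differ only by a null set), I view $Y$ as a closed hyperplane of $L^1(I)$, and the real task becomes showing this hyperplane is quasi-isometric to the ambient space.

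My strategy is to prove the stronger statement that $Y$ and $L^1(I)$ are linearly isomorphic as Banach spaces; by the open mapping theorem any such isomorphism is bi-Lipschitz in the norm metrics, and hence a quasi-isometry. The key observation I would exploit is that $L^1(I)$ carries a complemented copy of $\ell^1$. Concretely, I partition $I$ into positive-measure subintervals $\{A_n\}_{n \geq 0}$ (say $A_0 = [0, \tfrac{1}{2}]$ and $A_n = [1 - 2^{-n}, 1 - 2^{-n-1}]$ for $n \geq 1$), and let $E : L^1(I) \to L^1(I)$ denote conditional expectation onto the $\sigma$-algebra generated by this partition. Then $E$ is a bounded idempotent whose image is the space of step functions constant on each $A_n$, and the assignment $\sum_n \alpha_n \chi_{A_n} \mapsto (\alpha_n |A_n|)_n$ realizes $\mathrm{im}(E)$ as an isometric copy of standard $\ell^1$. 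This gives the topological decomposition $L^1(I) = \mathrm{im}(E) \oplus \ker(E)$. Intersecting with $Y$, I obtain $Y = (\mathrm{im}(E) \cap Y) \oplus \ker(E)$, since $\ker(E) \subset Y$ (zero mean on each $A_n$ forces overall mean zero) and $E$ preserves $Y$ (as $\int E(H) = \int H$).

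The remaining ingredient is the elementary observation that every closed hyperplane of $\ell^1$ is linearly isomorphic to $\ell^1$: the explicit shift $(a_0, a_1, a_2, \ldots) \mapsto (-\sum_n a_n, a_0, a_1, a_2, \ldots)$ is a linear bijection of $\ell^1$ onto the mean-zero hyperplane $\{b \in \ell^1 : \sum_n b_n = 0\}$ with norm distorted by a factor at most $2$. Under the identification $\mathrm{im}(E) \cong \ell^1$, the subspace $\mathrm{im}(E) \cap Y$ corresponds precisely to this mean-zero hyperplane, so $\mathrm{im}(E) \cap Y \cong \mathrm{im}(E)$, and therefore $Y \cong \mathrm{im}(E) \oplus \ker(E) = L^1(I)$ as Banach spaces. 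The main conceptual step is recognizing that everything hinges on the presence of a complemented $\ell^1$ inside $L^1(I)$; once this is in place, the rest is routine bookkeeping on direct-sum decompositions and norm equivalences.
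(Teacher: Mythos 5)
Your proof is correct and follows essentially the same route as the paper's: both build a bounded conditional-expectation projection onto step functions over a countable interval partition, thereby exhibiting a complemented isometric copy of $\ell^1$ inside $L^1(I)$, and then use a shift to show $\ell^1$ is isomorphic to its mean-zero hyperplane. The only cosmetic difference is that you carry out the direct-sum bookkeeping inside the hyperplane $Y$ itself, whereas the paper phrases the identical decomposition as $L^1(I)\cong\ker(\varphi)\oplus\ell^1\cong L^1(I)\oplus\mathbb{R}$ and then invokes the elementary equivalence between that isomorphism and the isomorphism of $L^1(I)$ with its hyperplanes.
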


\begin{proof}  The third corollary follows immediately from the previous two if $L^1(I)$ is isomorphic (and hence quasi-isometric) to each of its hyperplanes.  Equivalently, we want to show that $L^1(I)$ is isomorphic to the direct sum $L^1(I)\oplus\mathbb{R}$.  

To see this, we consider the mapping $\varphi:L^1(I)\rightarrow L^1(I)$ defined by setting $\varphi(F)$ equal to the constant $\left(\frac{1}{j}-\frac{1}{j+1}\right)\displaystyle\int_{(1/(j+1),1/j]}F$ on the interval $\left(\frac{1}{j},\frac{1}{j+1}\right]$, for each positive integer $j$.  Then $\varphi$ is a bounded linear projection, and hence $L^1(I)$ is isomorphic to the direct sum $\ker(\varphi)\oplus \varphi(L^1(I))$.  Moreover the image $\varphi(L^1)(I)$ is clearly isomorphic to the sequence space $\ell^1$, by simply mapping $\varphi(F)$ to $(\varphi(F)(1/j))_{j=1}^\infty$.  So $L^1(I)\cong \ker(\varphi)\oplus\ell^1$.  But $\ell^1$ is isomorphic to $\ell_1\oplus\mathbb{R}$ via index shifting, so we get $L^1(I)\cong \ker(\varphi)\oplus\ell_1\oplus\mathbb{R}\cong L^1(I)\oplus\mathbb{R}$.
\end{proof}

\section{Appendix}

As we mentioned in the introduction, there is a simple error in \cite{cohen_2016a}: the maximal metric provided there if right-invariant for $\Diff_+^1(I)$, but is not right-invariant for $\Diff_+^1(\mathbb{S}^1)$.  To correct the mistake, replace the mapping $\phi_1:\Diff_+^k(\mathbb{S}^1)\rightarrow C[0,1]$, $\phi_1(f)=\log f'-\log f'(0)$ given just before \cite{cohen_2016a} Lemma 2.3 with the mapping $\phi_1(f)=\log f'$.  Implementing this change, the continuous pseudometric $d_1$ defined just before \cite{cohen_2016a} Lemma 3.1 becomes $d_1(f,g)=\displaystyle\sup_{x\in\mathbb{S}^1}|\log f'-\log g'|$ (which is the pseudometric we are using in the present article) while the mapping $\Phi_k$ and the metrics $d_k$, $k\geq 2$ are unchanged.

Next one must check that the proofs in \cite{cohen_2016a} are still true for the modified definitions of $\phi_1$ and $d_1$.  By inspection, \cite{cohen_2016a} Lemma 3.2 remains true after a trivial modification of the proof; while the remainder of the theorems labeled 3.3 through 3.7 go through word for word.  Theorem 3.8 of \cite{cohen_2016a} characterizes the coarsely bounded subsets of $\Diff_+^k(\mathbb{S}^1)$.  The proof is still valid when $k\geq 2$, in light of the validity of 3.2--3.8.  However, the proof in the case $k=1$ requires a correction, because the mapping $\phi_1$ is no longer a surjection onto a linear subspace of $C(I)$.  We present a different proof below.

\begin{thm} \label{c1_coarsely_bounded}  A subset $A\subseteq\Diff_+^1(\mathbb{S}^1)$ is coarsely bounded if and only if $\displaystyle\sup_{f\in A}\sup_{x\in \mathbb{S}^1}|\log f'(x)|<\infty$.
\end{thm}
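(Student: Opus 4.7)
\medskip

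\noindent\textbf{Proof proposal.}  The forward direction is routine: the pseudometric $d$ is continuous and right-invariant, so any coarsely bounded set has finite $d$-diameter.  Indeed, if $U=\{h:d(h,e)<1\}$ and $A\subseteq FU^n$ for finite $F$ and $n\in\mathbb{N}$, then by the triangle inequality and right-invariance we have $d(h,e)\leq n$ for all $h\in U^n$, so $d$ is bounded on $A$.  Since $d(f,e)=\sup_{x\in\mathbb{S}^1}|\log f'(x)|$, the stated condition follows.

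For the converse, my plan is to mimic the strategy of Lemma \ref{lemma_1+AC_coarsely_bounded} (interpolation to identity) but to first reduce to the stabilizer of a point so as to have access to well-behaved lifts.  Let $K$ denote the compact group of rigid rotations and let $H=\{f\in\Diff_+^1(\mathbb{S}^1):f(0)=0\}$.  For each $f\in A$ there is a unique $r_f\in K$ with $r_f\cdot f\in H$, and the set $A^*=\{r_f\cdot f:f\in A\}$ lies in $H$ and has the same sup-of-$|\log f'|$ bound $M$ as $A$, since rotations have derivative $1$.  Because $A\subseteq K\cdot A^*$ and coarsely bounded sets form a bornology closed under finite products, it suffices to show that $A^*$ is coarsely bounded.

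Fix $g\in A^*$ and lift it to $\tilde g:[0,1]\to[0,1]$ with $\tilde g(0)=0$, $\tilde g(1)=1$ and $e^{-M}\leq\tilde g'\leq e^M$.  For a positive integer $N$ to be chosen, set
\[
\tilde g_i(x)=\tfrac{i}{N}x+\bigl(1-\tfrac{i}{N}\bigr)\tilde g(x),\qquad 0\leq i\leq N,
\]
which are strictly increasing diffeomorphisms of $[0,1]$ fixing $0$ and $1$ and hence descend to $g_i\in H$ with $g_0=g$ and $g_N=\id$.  Elementary estimates give
\[
|\tilde g_i(x)-\tilde g_{i-1}(x)|=\tfrac{1}{N}|x-\tilde g(x)|\leq\tfrac{1}{N},\qquad |\tilde g_i'(x)-\tilde g_{i-1}'(x)|=\tfrac{1}{N}|1-\tilde g'(x)|\leq\tfrac{e^M-1}{N},
\]
and the common lower bound $\tilde g_i'\geq e^{-M}$ then yields $|\log\tilde g_i'-\log\tilde g_{i-1}'|_\infty\leq C(M)/N$ for an explicit constant $C(M)$.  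Putting $u_i=g_i\cdot g_{i-1}^{-1}$, these estimates transfer to $u_i$ since $u_i(x)-x=(g_i-g_{i-1})(g_{i-1}^{-1}(x))$ and $\log u_i'=(\log g_i'-\log g_{i-1}')\circ g_{i-1}^{-1}$; thus $\sup_x|u_i(x)-x|$ and $\sup_x|\log u_i'(x)|$ are both $\leq C'(M)/N$, uniformly in $g\in A^*$.

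Now given any identity neighborhood $V$ in $\Diff_+^1(\mathbb{S}^1)$, we may replace $V$ by a symmetric open subneighborhood defined by a $C^1$-ball $\{h:\sup|h-\id|<\delta,\,\sup|\log h'|<\delta\}$.  Choosing $N=N(V,M)$ so large that $C'(M)/N<\delta$ forces $u_i\in V$ for every $i$ and every $g\in A^*$, so $g=u_N^{-1}u_{N-1}^{-1}\cdots u_1^{-1}\in V^N$.  Hence $A^*\subseteq V^N$, proving $A^*$ is coarsely bounded and finishing the argument.  The one minor subtlety, which is the main obstacle, is that we need smallness of $u_i$ in the full $C^1$ topology (both uniform and derivative distance), not just in $d$; the linear interpolation on lifts is precisely what supplies the $1/N$ bound on the $C^0$ part, while the derivative bound is automatic from the $d$-estimate---this is exactly where the reduction to $H$ via rotations is essential.
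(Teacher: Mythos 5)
Your proposal is correct and follows essentially the same route as the paper: reduce to the stabilizer $H$ of $0$ via the compact group of rotations, then linearly interpolate the lift to the identity and estimate the increments $u_i=g_ig_{i-1}^{-1}$.  The only cosmetic difference is that you verify the $C^0$-smallness of $u_i$ by a direct computation, whereas the paper achieves the same end by noting that the restriction $d_1|_H$ is a compatible metric on $H$ (both observations rest on the same point: elements of $H$ fix $0$, so the derivative bound controls the $C^0$ distance).
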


\begin{proof}  Let $A\subseteq\Diff_+^1(M^1)$ be a coarsely bounded set, and let $U$ be the $d_1$-ball about identity of radius $1$.  It means that there is a finite set $F$ and an integer $r$ with $A\subseteq FU^r$.  Since $d$ is right-invariant, $FU^r$ is $d_1$-bounded, and hence so is $A$.  Therefore $\displaystyle\sup_{f\in A}\sup_{x\in \mathbb{S}^1}|\log f'(x)|<\infty$.

Conversely, suppose $\displaystyle\sup_{f\in A}\sup_{x\in \mathbb{S}^1}|\log f'(x)|=M<\infty$.  Let $H$ denote the stabilizer of $0$ in $\Diff_+^1(\mathbb{S}^1)$, so $H$ is a closed subgroup of $\Diff_+^1(\mathbb{S}^1)$, and the restriction of $d_1$ to $H$ is a right-invariant metric on $H$.  Let $K$ denote the compact subgroup of rotations.  Define:

\begin{center} $A^*=\{a^*\in H:\exists t\in\mathbb{S}^1 ~ \exists a\in A ~ ta^*=a\}$.
\end{center}
\vspace{.3cm}

Note that if $f\in A^*$, then we may write $f=tg$ for $t$ a rotation and $g\in A$, and thus $\log f'=\log g'$.  So $\displaystyle\sup_{f\in A}\sup_{x\in \mathbb{S}^1}|\log f'(x)|=M<\infty$.

We will first show that $A^*$ is coarsely bounded.  Let $U\subseteq\Diff_+^1(\mathbb{S}^1)$ be an arbitrary open set.  Then $U\cap H$ is open in $H$, and thus contains a $d_1$-ball in $H$ of radius $\epsilon$; let $V$ denote this ball.  Let $f\in A$ be arbitrary.  Let $N$ be a large integer to be determined later, and consider the functions\\

\begin{center} $f_i(x)=\frac{i}{N}x+(1-\frac{i}{N})f(x)$
\end{center}
\vspace{.3cm}

\noindent where $\id$ denotes the identity diffeomorphism.  Observe that by hypothesis we have $e^{-M}\leq f'\leq e^M$, and therefore\\

\begin{center} $e^{-M} \leq f' \leq f'+\frac{i}{N}(1-f') = f_i'\leq e^M + 1$.
\end{center}
\vspace{.3cm}

Let $K$ be a Lipschitz constant for $\log$ on the interval $[e^{-M},e^M+1]$.  Then we have for each $1\leq i\leq N$, for each $x\in\mathbb{S}^1$,\\

\begin{align*}
|\log f_i'(x)-\log f_{i-1}'(x)| &\leq K|f_i'(x)-f_{i-1}'(x)|\\
&\leq K|\frac{1}{N}(1-f')|\\
&\leq \frac{K(1-e^{-M}}{N}.
\end{align*}
\vspace{.3cm}

We now choose $N$ to be so large that the last line above is $<\epsilon$.  Set $v_i=f_if_{i-1}^{-1}$, so $d(v_i,e)=d(f_i,f_{i-1})<\epsilon$ and $v_i\in V$.  We have $f=v_1v_2...v_N$, so $f\in V^N\subseteq U^N$.  Since $f\in A^*$ was arbitrary, this shows $A^*\subseteq U^N$, and since $U$ was an arbitrary open set, this shows $A^*$ is coarsely bounded.  Since $K$ is compact, $K$ is coarsely bounded, and therefore the product $KA^*$ is coarsely bounded.  Since $A\subseteq KA^*$, $A$ is coarsely bounded, completing the proof.
\end{proof}

The quasi-isometry type of $\Diff_+^1(\mathbb{S}^1)$ is computed in \cite{cohen_2016a} Theorem 4.3; however the proof relies on the erroneous metric.  Here we give the result with a corrected proof.

\begin{thm}  The pseudometric $d(f,g)=d_1(f,g)=\displaystyle\sup_{x\in\mathbb{S}^1}|\log f'-\log g'|$ defined on $\Diff_+^1(\mathbb{S}^1)$ is maximal, and $\Diff_+^1(\mathbb{S}^1)$ is quasi-isometric to $Z=\{f\in C(I):f(0)=f(1)=0\}$ via the mapping $f\mapsto \log f'-\log f'(0)$.
\end{thm}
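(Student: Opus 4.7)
My plan is to invoke Rosendal's criterion (\cite{rosendal_2018a} Proposition 2.52), which reduces maximality of $d$ to verifying that it is right-invariant, coarsely proper, and large-scale geodesic. Right-invariance follows from the chain rule: $\log(fh)'(x) - \log(gh)'(x) = \log f'(h(x)) - \log g'(h(x))$, and the supremum over $x\in\mathbb{S}^1$ is unchanged under the substitution $y = h(x)$. Coarse properness is exactly the content of Theorem \ref{c1_coarsely_bounded} established above.

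For the large-scale geodesic property, I would mimic the strategy used in the $C^{1+AC}$ case, reducing to a cobounded subgroup which I then identify with a Banach space. Let $H$ denote the stabilizer of $0\in\mathbb{S}^1$ and $K$ the compact subgroup of rotations. Since every rotation has derivative identically $1$, the subgroup $K$ has $d$-diameter zero; writing any $g\in\Diff_+^1(\mathbb{S}^1)$ as $g = kh$ with $k\in K$, $h\in H$, right-invariance yields $d(g,h) = d(k,e) = 0$, so the inclusion $(H,d|_H)\hookrightarrow(\Diff_+^1(\mathbb{S}^1),d)$ is cobounded and in particular a quasi-isometry. It therefore suffices to show that $d|_H$ is large-scale geodesic, for which I will exhibit $(H,d|_H)$ as bi-Lipschitz equivalent to the Banach space $Z = \{\varphi\in C(I):\varphi(0)=\varphi(1)=0\}$ via the map $\Phi(f) = \log f' - \log f'(0)$.

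The key calculation is as follows. The map $\Phi$ lands in $Z$ since $f'(0) = f'(1)$ for circle diffeomorphisms, and $\Phi|_H$ is a bijection onto $Z$ with explicit inverse $g\mapsto\bigl(x\mapsto C^{-1}\int_0^x e^{g(t)}\,dt\bigr)$ where $C = \int_0^1 e^g$. Writing $a := \log f' - \log g'$, we have $d(f,g) = \sup|a|$ while $\|\Phi(f) - \Phi(g)\|_\infty = \sup|a - a(0)|$. The triangle inequality yields $\|\Phi(f) - \Phi(g)\|_\infty \leq 2d(f,g)$ at once. For the reverse direction---the only non-mechanical step---I apply the mean value theorem to the circle diffeomorphism $fg^{-1}$ to produce $y\in\mathbb{S}^1$ with $(fg^{-1})'(y) = 1$, so $a$ has a zero, whence $|a(0)| \leq \|\Phi(f) - \Phi(g)\|_\infty$ and hence $d(f,g) \leq \sup|a - a(0)| + |a(0)| \leq 2\|\Phi(f) - \Phi(g)\|_\infty$. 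Since $Z$ is a Banach space it is geodesic, so $d|_H$ is large-scale geodesic and hence so is $d$, completing the maximality argument. The same bi-Lipschitz map $\Phi$ simultaneously delivers the stated quasi-isometry with $Z$.

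The main obstacle is conceptual rather than technical: the naive map $f\mapsto\log f'$ is not well-defined into a linear subspace and does not produce a right-invariant metric (this being the error from \cite{cohen_2016a} that the appendix corrects), so one must subtract $\log f'(0)$ to land in a Banach space on which rotations act trivially. The circle mean value theorem then recovers the lost information up to a factor of two, which is what distinguishes this argument from the easier interval case.
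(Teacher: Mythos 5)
Your proof is correct and follows essentially the same route as the paper: invoke Rosendal's Proposition 2.52, use the coarse properness from Theorem \ref{c1_coarsely_bounded}, reduce to the cobounded stabilizer $H$ of $0$, and show $\Phi(f)=\log f'-\log f'(0)$ is a bi-Lipschitz bijection of $(H,d|_H)$ onto the Banach space $Z$ using the circle mean value theorem to gain control over the normalizing constant. If anything, your application of the mean value theorem to $fg^{-1}$ (rather than to a single $f$) makes the derivation of $d|_H(f,g)\leq 2\|\Phi(f)-\Phi(g)\|_\infty$ slightly more transparent than the paper's abbreviated version.
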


\begin{proof}  Again let $H$ denote the stabilizer of $0$ in $\Diff_+^1(\mathbb{S}^1)$.  By Theorem \ref{c1_coarsely_bounded}, $d$ is coarsely proper as well as right-invariant.   Thus its restriction to $H$, which we denote $d|_H$, has the same properties.

The mapping $f\mapsto \log f'-\log f'(0)$ is a bijection from $H$ onto $Z$ (see \cite{cohen_2016a}), and thus we induce a metric $\sigma$ on $H$ defined by $\sigma(f,g)=\|\log f'-\log f'(0)-(\log g'-\log g'(0))\|$.  Being isometric to the norm metric on $Z$, $\sigma$ is a geodesic metric.  We note that for any $f\in H$, $\log f'(0)\leq \|\log f'\|$; from this it is easy to compute that $\sigma(f,g)\leq 2d|_H(f,g)$.  On the other hand, by the mean value theorem, there exists $x\in\mathbb{S}^1$ so that $\log f'(x)=0$, and therefore $\log f'(0)=-(\log f'(x)-\log f'(0))\leq\|\log f'-\log f'(0)\|$.  From this we deduce $d_|H(f,g)\leq 2\sigma(f,g)$.  So $d|_H$ and $\sigma$ are bi-Lipschitz equivalent.  Since $\sigma$ is geodesic, it follows that $d|_H$ is large-scale geodesic, and hence $d|_H$ is a maximal metric on $H$ by \cite{rosendal_2018a} Proposition 2.52.  Moreover, $H$ is quasi-isometric to $Z$.

Lastly, note that the metric space inclusion $(H,d|_H)\rightarrow(\Diff_+^1(\mathbb{S}^1),d)$ is cobounded, since for every $g\in\Diff_+^1(\mathbb{S}^1)$, there is $h\in H$ and a rotation $r$ such that $rh=g$, and $d(g,h)=d(r,e)=0$.  Thus $d$ is quasi-isometric to $d|_H$; hence $d$ is large-scale geodesic and maximal on $\Diff_+^1(\mathbb{S}^1)$.  The conclusions of the theorem follow immediately.
\end{proof}

Since $C(I)$ is isomorphic to its hyperplanes, we recover the following.

\begin{cor}  $\Diff_+^1(\mathbb{S}^1)$ is quasi-isometric to $C(I)$.
\end{cor}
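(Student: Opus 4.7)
By the preceding theorem, $\Diff_+^1(\mathbb{S}^1)$ is quasi-isometric to the Banach space $Z = \{f \in C(I) : f(0) = f(1) = 0\}$, and any linear Banach-space isomorphism is automatically a quasi-isometry, so it suffices to show $Z \cong C(I)$. Since $Z$ sits as a closed hyperplane inside the hyperplane $\{f \in C(I) : f(0) = 0\}$, it will be enough to prove that $C(I)$ is isomorphic to every one of its hyperplanes; two applications then yield $Z \cong C(I)$. The fact that any two closed hyperplanes of a Banach space are mutually isomorphic is standard (fix $H_1 \cap H_2$ pointwise and send a chosen element of $H_1 \setminus H_2$ to one of $H_2 \setminus H_1$), so the whole reduction boils down to exhibiting a single isomorphism $C(I) \cong C(I) \oplus \mathbb{R}$, exactly as in the author's treatment of $L^1(I)$ above.

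To produce this isomorphism, I will mimic the projection construction used for $L^1(I)$. Set $t_0 = 0$ and $t_j = 1/j$ for $j \geq 1$, and let $\varphi : C(I) \to C(I)$ be the piecewise linear interpolation operator at the nodes $\{t_j\}_{j \geq 0}$: that is, $\varphi(f)$ agrees with $f$ at each $t_j$ and is affine on each interval $[t_{j+1}, t_j]$, with continuity at $0$ forced by $f(1/j) \to f(0)$. A routine check shows that $\varphi$ is bounded, linear, and idempotent, so $C(I) \cong \ker(\varphi) \oplus \varphi(C(I))$. Evaluation at the nodes identifies $\varphi(C(I))$ isometrically with the Banach space $c$ of convergent real sequences under the sup norm, where the limit of the sequence corresponds to the value at $0$.

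Finally, one has the classical identifications $c \cong c_0 \oplus \mathbb{R}$ (peel off the limit) and $c_0 \cong c_0 \oplus \mathbb{R}$ (the right shift exhibits $c_0$ as a hyperplane of itself). Together these give $\varphi(C(I)) \cong c_0$ with $\varphi(C(I)) \oplus \mathbb{R} \cong \varphi(C(I))$, so
\[
C(I) \oplus \mathbb{R} \cong \ker(\varphi) \oplus \varphi(C(I)) \oplus \mathbb{R} \cong \ker(\varphi) \oplus \varphi(C(I)) \cong C(I),
\]
as desired. I do not anticipate a serious obstacle; the only items requiring honest (if routine) verification are the boundedness and idempotence of $\varphi$, the identification of its image with $c$, and the standard Banach-space fact that all hyperplanes of $C(I)$ are mutually isomorphic. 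Each of these is directly parallel to a step already carried out in the preceding $L^1$ proof.
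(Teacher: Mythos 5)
Your proof is correct and uses the same strategy the paper intends: reduce to the classical fact that $C(I)$ is isomorphic to its hyperplanes, which the paper merely cites without argument. You supply the missing detail (correctly noting that $Z$ has codimension two, so the hyperplane fact must be applied twice), and your piecewise-linear interpolation projection onto $c$, followed by $c\cong c_0\cong c_0\oplus\mathbb{R}$, is a clean proof of $C(I)\cong C(I)\oplus\mathbb{R}$ directly parallel to the $L^1(I)$ construction carried out earlier in the paper.
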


\end{document}